\let\mathcal\mathscr
\def\N{{\bf N}} 
\def\P{{\bf P}}
\def\R{{\bf R}}
\def\Q{{\bf Q}}
\def\Alb{\mathop{\rm Alb}\nolimits}
\def\Cosupp{\mathop{\rm Cosupp}\nolimits}
\def\Supp{\mathop{\rm Supp}\nolimits}
\def\dra{\dashrightarrow}
\def\phi{\varphi}
\def\d{{\delta}}
\def\e{{\varepsilon}}
\def\m{{\mu}}
\def\r{{\rho}}
\def\D{{\Delta}}
\def\E{{\Sigma}}
\def\G{{\Gamma}}
\def\cJ{{\mathcal J}}
\def\cO{{\mathcal O}}
\def\Eff{\mathop{\rm Eff}\nolimits}
\def\nn{\mathop{\rm NNef}\nolimits}
\def\sbs{\mathop{\rm SBs}\nolimits}
\def\Nklt{\mathop{\rm {Nklt}}\nolimits}
\def\Eff{\mathop{\rm Eff}\nolimits}
\def\Bs{\mathop{\rm Bs}\nolimits}
\def\ge{\geqslant}
\newtheorem{thm}{Theorem}[section]
\newtheorem{cor}[thm]{Corollary}
\newtheorem{prop}[thm]{Proposition}
\newtheorem{lem}[thm]{Lemma}
\newtheorem{ex}[thm]{Example}
\title[Rational connectedness modulo the Non-nef locus]{Rational connectedness modulo the Non-nef locus}
\author[A. Broustet and G.~Pacienza]{Ama\"el Broustet and Gianluca Pacienza}
\date{\today}
\begin{document}
\maketitle
{\let\thefootnote\relax
\footnote{\hskip-1.2em
\textbf{Key-words :} Rational connectedness, non-nef locus, big and pseff divisors, rational curves.\\
\noindent
\textbf{A.M.S.~classification :} 14J40. 
 }}
\numberwithin{equation}{section}

%
\begin{abstract}
It is well known that a smooth projective  Fano variety  is rationally connected. Recently Zhang \cite{zhang} (and later Hacon and McKernan \cite{HM} as a special case of their work on the Shokurov RC-conjecture) proved that the same conclusion holds for a klt pair $(X,\D)$ such that  $-(K_X+\D)$ is big and nef. We prove here a natural generalization of the above result by dropping the nefness assumption. Namely 
we show that a klt pair $(X,\D)$ such that $-(K_X+\D)$ is big is rationally connected modulo the non-nef locus of $-(K_X+\D)$. This result is a consequence of a more general structure theorem for arbitrary pairs $(X,\D)$  with $-(K_X+\D)$ pseff.
\end{abstract}
%
%
%
\section{Introduction}
%
An important and nowadays classical property of smooth Fano varieties is their rational connectedness, which was established by Campana \cite{Cam1} and by Koll\'ar, Miyaoka and Mori \cite{KMM}.  
Singular varieties, and more generally pairs, with a weaker positivity property arise naturally in the Minimal Model Program, and it was conjectured  that a klt pair $(X,\D)$ with 
$-(K_X+\D)$ big and nef is rationally connected. The conjecture was proved by Zhang \cite{zhang}, and was also obtained later by Hacon and McKernan \cite{HM} as a special case of their work on the Shokurov RC-conjecture.
Even in the smooth case, as soon as the nefness hypothesis is dropped one can loose the rational connectedness, as shown by the following example (which is a natural generalization to arbitrary dimension of \cite[Example 6.4]{taka2}). 
\begin{ex}\label{ex:cone}
{\textrm Let ${\bf P}^N$ be a hyperplane of ${\bf P}^{N+1}$, $Z$ a smooth hypersurface of degree $N+1$ in ${\bf P}^N$ and $C_Z \subset {\bf P}^{N+1}$ a cone over it. 
Let $X$ be 
the blowing up of $C_Z$ at its vertex. Notice that $X$ is a $\P^1$-bundle over the strict transform of $Z$. The variety $X$ has canonical divisor equal to $K_X = -H + aE$ where $E$ is the exceptional divisor (which is isomorphic to $Z$) and $H$ the pull-back on $X$ of an hyperplane section of $C_Z$. Since $C_Z$ has multiplicity $N+1$ at its vertex, an easy computation shows that $a= -1$. Thus $-K_X$ is big, but $X$ is not rationally connected since $Z$ is not.}
\end{ex}
One way to measure the lack of nefness of a divisor is the non-emptyness of a perturbation of its (stable) base locus. To be precise, let us recall first that 
the stable base locus $\sbs(D)$ of a divisor $D$ is given by the set-theoretic intersection $\cap_{m\geq1} \Bs(mD)$ of the base loci of all its multiples. Then one defines, following \cite{N} (see also \cite{bouck} definition 3.3 and theorem A.1), the non-nef locus of $D$ to be
the set $\nn(D):=\cup_m\sbs(mD+A)$, where $A$ is a fixed ample divisor . One checks that the definition does not depend on the choice of $A$, and that $\nn(D)=\emptyset$ if, and only if, $D$ is nef.  Notice that the  countable union defining $\nn(D)$ is a finite one, if the ring $R(X,D)$ is finitely generated (see \S 2.4 below).
In any  case $\nn(D)$ is not dense, as soon as $D$ is pseudo-effective, and we always have $\nn(D)\subset \sbs(D)$.
We prove the following.
\begin{thm}\label{thm:main}
Let $(X,\D)$ be a pair such that $-(K_X+\D)$ is big. Then $X$ is rationally connected modulo  an irreducible component  of $\nn(-(K_X+\D))\cup \Nklt(X,\D)$, i.e. there exists an irreducible component $V$ of $\nn(-(K_X+\D))\cup \Nklt(X,\D)$ such that for any general point $x$  of $X$ there exists a rational curve $R_x$ passing through $x$  and intersecting $V$. 
\end{thm}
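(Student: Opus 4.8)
The strategy is to run the argument through the maximal rationally connected (MRC) fibration and to reduce the statement to a positivity property of its base. First pass to a log resolution $g\colon X'\to X$ of $(X,\D)$ on which the (almost holomorphic) MRC fibration of $X$ becomes a morphism $f\colon X'\to Z$, with $Z$ smooth projective, and set $K_{X'}+\D'=g^*(K_X+\D)$, so that $-(K_{X'}+\D')$ is again big and $(X',\D')$ is crepant to $(X,\D)$. Since the MRC fibration is almost holomorphic, and since $\nn$ and $\Nklt$ vary in a controlled way under crepant birational maps, every irreducible component of $\nn(-(K_{X'}+\D'))\cup\Nklt(X',\D')$ either is $g$-exceptional over a subset of $X$ not dominating $Z$, or maps into $\nn(-(K_X+\D))\cup\Nklt(X,\D)$; it is therefore enough to find, through a general point of $X'$, a rational curve meeting a component that is not $g$-exceptional. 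If $\dim Z=0$ then $X'$ is rationally connected, and in a rationally connected smooth projective variety one can join a general point to any prescribed point by an irreducible rational curve (smooth a chain attached to a free curve through the general point); taking $V$ to be any component of the bad locus finishes this case.

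Assume now $\dim Z>0$. By Graber--Harris--Starr together with Koll\'ar's complement, the base of the MRC fibration is not uniruled, hence $K_Z$ is pseudo-effective by Boucksom--Demailly--Paun--Peternell. The crux is the \emph{claim} that some irreducible, non-$g$-exceptional component $V'$ of $\nn(-(K_{X'}+\D'))\cup\Nklt(X',\D')$ dominates $Z$. Granting this, a general point $x\in X'$ lies on a smooth rationally connected general fibre $F=f^{-1}(f(x))$ with $F\cap V'\neq\emptyset$; joining $x$ to a point of $F\cap V'$ by an irreducible rational curve inside $F$ and pushing it down by $g$ yields a rational curve through the general point $g(x)$ of $X$ meeting the component $V$ of $\nn(-(K_X+\D))\cup\Nklt(X,\D)$ that contains $g(V')$, as required.

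To prove the claim, suppose no such component dominates $Z$. Then for general $z\in Z$ the fibre $F=f^{-1}(z)$ is disjoint from $\nn(-(K_{X'}+\D'))\cup\Nklt(X',\D')$. Disjointness from $\Nklt$ gives that $(F,\D'|_F)$ is klt; disjointness from the non-nef locus gives, via the inclusion $\sbs(L|_F)\subseteq\sbs(L)\cap F$ applied with $L=m(-(K_{X'}+\D'))+A$ and letting $m\to\infty$, that $-(K_F+\D'|_F)=-(K_{X'}+\D')|_F$ is nef; it is moreover big, being the restriction of a big class to a general fibre. Thus $(F,\D'|_F)$ is klt with big and nef anticanonical class, which is consistent with $F$ being an MRC fibre, so one must globalize. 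Since $-(K_{X'/Z}+\D')=-(K_{X'}+\D')+f^*K_Z$ and $K_Z$ is pseudo-effective, $-(K_{X'/Z}+\D')$ is big; combining this with the klt-over-the-generic-point hypothesis and the weak positivity, in the spirit of Viehweg and of \cite{zhang}, of the direct images $f_*\cO_{X'}(m(K_{X'/Z}+\D'))$, one concludes that $Z$ is uniruled, contradicting the choice of $f$. This proves the claim, hence the theorem.

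The main obstacle is precisely this globalization: upgrading ``big'' (rather than ``big and nef'') of the relative anticanonical class to uniruledness of $Z$. The failure of nefness is measured by the non-nef locus, so one runs the positivity argument after correcting $-(K_{X'}+\D')$ by the negative part $N_\sigma$ of its Nakayama $\sigma$-decomposition $-(K_{X'}+\D')=P_\sigma+N_\sigma$, whose support lies in $\nn$; the codimension $\ge 2$ components of $\nn$ are invisible to this decomposition, and it is exactly here that one needs $V$ to be an arbitrary, not necessarily divisorial, component of the bad locus. A secondary, bookkeeping obstacle is to choose $X'$ so that the passage from $X$ to $X'$ creates no spurious component dominating $Z$, which is arranged using that the MRC fibration is almost holomorphic.
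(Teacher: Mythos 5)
Your overall skeleton matches the paper's: pass to a resolution on which the MRC fibration is a morphism $f\colon X'\to Z$, use Graber--Harris--Starr to see that $Z$ is not uniruled, then argue that the ``bad locus'' must dominate $Z$ (for otherwise $Z$ would be uniruled) and join a general point to it by a rational curve inside a general fibre. The key missing step, however, is exactly the one you flag yourself as ``the main obstacle,'' and your proposed fix does not work as stated. You suggest correcting $-(K_{X'}+\D')$ by the negative part $N_\sigma$ of its Zariski decomposition and then running weak positivity. But Campana/Viehweg weak positivity applies to $f_*\cO(m(K_{X'/Z}+\Gamma))$ with $\Gamma$ \emph{effective} and \emph{klt on the generic fibre}, and is about the sheaf itself being weakly positive -- you still need an input that the relevant relative adjoint class has sections on the general fibre and that the resulting positivity forces $-K_Z$ to be (essentially) pseff. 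Feeding a $\sigma$-decomposition into this does not by itself produce the required effective $\Q$-divisor with controlled non-klt locus, and, as you note, it sees nothing in codimension $\geq 2$. Also, your crepant pullback $K_{X'}+\D'=g^*(K_X+\D)$ need not have $\D'$ effective, which is incompatible with invoking direct-image positivity.

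What the paper does instead, and what fills precisely this gap, is Proposition~\ref{prop:star}: for a big $L$ one can construct, using the asymptotic multiplier ideal of $L$ and a resolution on which everything becomes normal crossing, an \emph{effective} $\Q$-divisor $D\sim_\Q L$ with $\Nklt(X,\D+D)\subset\nn(L)\cup\Nklt(X,\D)$. Applying this to $L=-(K_X+\D)$ yields a pair $(X,\D+D)$ with $K_X+\D+D\sim_\Q 0$, whose non-klt locus is contained in the target set and whose boundary $\D+D$ is big. Now the non-nef locus has been absorbed into a non-klt locus, and one can apply the Campana-type positivity argument (Theorem~\ref{thm:kod=0}, proved as a byproduct of Theorem~\ref{thm:gen}, using Corollary~\ref{cor:campana} and a delicate choice of ample pieces) to conclude that if this non-klt locus fails to dominate the MRC base then the base is uniruled -- the desired contradiction. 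Your step where you restrict to a general fibre and deduce klt with big nef anticanonical is correct but, as you observe, yields no contradiction by itself; it is the conversion of the non-nef locus into a non-klt locus of an effective $\Q\sim 0$ pair that makes the weak positivity machinery bite.
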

If the pair is klt, i.e. if the non-klt locus $\Nklt(X,\D)$ is empty (see \S 2.2 for the definition of the non-klt locus), we immediately deduce the following.
\begin{cor}\label{cor:main}
Let $(X,\D)$ be a klt pair such that $-(K_X+\D)$ is big. Then $X$ is rationally connected modulo an irreducible component of the non-nef locus of $-(K_X+\D)$.
\end{cor}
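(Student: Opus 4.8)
The plan is to analyse the maximal rationally connected (MRC) fibration of $X$ and to show that, as soon as it is non-trivial, the bigness of $-(K_X+\D)$ together with the non-uniruledness of its base confines the obstruction to rational connectedness to a component of $\nn(-(K_X+\D))\cup\Nklt(X,\D)$. First I would replace $(X,\D)$ by a log resolution $f\colon Y\to X$ of $(X,\D)$, chosen so that the MRC fibration of $Y$ is moreover a morphism, and $\D$ by the boundary $\D_Y$ with $K_Y+\D_Y=f^*(K_X+\D)$; one checks that $-(K_Y+\D_Y)$ is still big, that $f$ sends $\nn(-(K_Y+\D_Y))\cup\Nklt(Y,\D_Y)$ into $\nn(-(K_X+\D))\cup\Nklt(X,\D)$, and that a rational curve through a general point of $Y$ meeting the first set descends to one through a general point of $X$ meeting the second. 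So we may assume $X$ smooth, $\D$ of simple normal crossing support, and the MRC fibration an honest morphism $g\colon X\to Z$ with $Z$ smooth and general fibre $F$ smooth and rationally connected.

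If $\dim Z=0$ then $X$ is rationally connected, and by Koll\'ar's theorem that two points of a smooth rationally connected variety lie on an irreducible rational curve, the conclusion holds for every component of $B:=\nn(-(K_X+\D))\cup\Nklt(X,\D)$ (if $B=\varnothing$ this is just Zhang's theorem, and ``rationally connected modulo $B$'' is read as ``rationally connected''). So assume $\dim Z\ge1$. By Graber--Harris--Starr the base $Z$ is not uniruled, hence $K_Z$ is pseudo-effective by Boucksom--Demailly--Paun--Peternell; recall also that every rational curve through a very general point $x\in X$ is contained in the fibre $F_x=g^{-1}(g(x))$.

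Now suppose the Theorem fails. As $B$ is a countable union of irreducible closed subsets, a standard incidence/countability argument shows that for a very general $x$ no rational curve through $x$ meets $B$; since $F_x$ is smooth and rationally connected, connecting $x$ to a point of $F_x\cap B$ by an irreducible rational curve inside $F_x$ would give such a curve, so $F_x\cap B=\varnothing$. Thus $(X,\D)$ is klt and $-(K_X+\D)$ is nef over the generic point of $Z$, and $\nn(-(K_X+\D))$ does not dominate $Z$; so over a non-empty open $Z^{\circ}\subseteq Z$ the pair is klt and $-(K_X+\D)$ is relatively nef and relatively big. By the relative base-point-free theorem $-(K_{X/Z}+\D)$ is relatively semiample over $Z^{\circ}$, and passing to its relative ample model and extending over $Z$ produces a klt Fano-type fibration $h\colon W\to Z$, birational to $X$ over $Z$, with $-(K_{W/Z}+\D_W)$ relatively ample. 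Taking a general effective $\Q$-divisor $\Gamma\sim_{\Q,Z}-(K_{W/Z}+\D_W)$ gives a klt pair $(W,\D_W+\Gamma)$ with $K_W+\D_W+\Gamma\sim_{\Q,Z}0$, and the canonical bundle formula yields $B_Z\ge0$ and a pseudo-effective $M_Z$ with $K_W+\D_W+\Gamma\sim_{\Q}h^*(K_Z+B_Z+M_Z)$; since $Z$ is not uniruled, $K_Z+B_Z+M_Z$ is pseudo-effective. Hence $-(K_W+\D_W)=\Gamma-h^*(K_Z+B_Z+M_Z)$ is the sum of an effective $\Q$-divisor and the pull-back of an anti-pseudo-effective class; comparing this with the facts that $-(K_W+\D_W)$ is big (it is birational to $-(K_X+\D)$) and is nef over $Z^{\circ}$, a positivity/dimension count excludes $\dim Z\ge1$, a contradiction. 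Therefore the Theorem holds for the model of the first paragraph, and, undoing that reduction, for the original pair $(X,\D)$.

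The main obstacle is the last implication: extracting a contradiction from ``$-(K_W+\D_W)$ big, nef over $Z^{\circ}$, and equal to an effective $\Q$-divisor minus the pull-back of a pseudo-effective class from a positive-dimensional $Z$'' --- this is precisely where Zhang's and Hacon--McKernan's proofs invoke nefness, and doing without it demands careful control both of the relative model of $g$ and of the moduli part $M_Z$ in the canonical bundle formula. A secondary technical point is to ensure, throughout the birational modifications, that the component of the bad locus one finally produces lands in $\nn(-(K_X+\D))\cup\Nklt(X,\D)$ rather than in the a priori larger stable base locus; here one wants finite generation of the relevant section rings in order to replace the countable loci above by genuine open, respectively closed, subsets.
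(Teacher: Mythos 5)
Your skeleton (make the MRC fibration a morphism $g\colon X\to Z$, invoke Graber--Harris--Starr so that $Z$ is not uniruled, and argue by contradiction that if no rational curve through a very general point meets $B=\nn(-(K_X+\D))\cup\Nklt(X,\D)$ then the very general fibre avoids $B$) is the same as the paper's, but the argument has a genuine gap exactly at the point you yourself flag as ``the main obstacle'': no contradiction is actually derived when $\dim Z\ge 1$, and the identity you set up cannot yield one. Because you chose $\Gamma\sim_{\Q,Z}-(K_{W/Z}+\D_W)$, the divisor $\Gamma$ is only determined up to adding pull-backs from $Z$, and the relation $-(K_W+\D_W)=\Gamma-h^*(K_Z+B_Z+M_Z)$ is perfectly compatible with $-(K_W+\D_W)$ being big: the effective divisor $\Gamma$ can absorb all of its positivity, so ``big $=$ effective $+$ pull-back of an anti-pseudo-effective class'' is not contradictory, and no ``positivity/dimension count'' will rescue this as stated. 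There are also unjustified reductions upstream: from ``the very general fibre misses $\nn(-(K_X+\D))$'' you pass to relative nefness (and then, via the relative base-point-free theorem, semiampleness and a relative ample model) over a non-empty \emph{open} $Z^{\circ}$; but $\nn$ is only a countable union of closed subsets, so its image in $Z$ is merely contained in a countable union of proper closed subsets and need not avoid any open set, and extending the relative ample model from $Z^{\circ}$ to all of $Z$ is a further unaddressed point.

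What is missing is the positivity input that lets the bigness of $-(K_X+\D)$ act on $Z$. The paper supplies it in two steps. First, Proposition \ref{prop:star} (asymptotic multiplier ideals plus \cite[Theorem 11.2.21]{l2}) produces an effective $D\sim_\Q-(K_X+\D)$ with $\Nklt(X,\D+D)\subset \nn(-(K_X+\D))\cup\Nklt(X,\D)$; thus $K_X+\D+D\sim_\Q 0$, $\kappa=0$, $\D+D$ is big (so it contains an ample summand $A$), and under your contradiction hypothesis the new pair restricts to a klt pair on the general fibre. Second, on a resolution one trades that ample summand for $2g^*H_Z+\G$ with $H_Z$ ample on $Z$ (Lemma \ref{lem:elem}), and Campana's weak positivity of direct images (Corollary \ref{cor:campana}) yields sections of $b(K_{Y/Z}+B_Y+\G+g^*A_Z)$, whence $0=\kappa(Y,K_Y+\E)\ge\kappa(Z,K_Z+A_Z)$; since $Z$ is not uniruled, $K_Z$ is pseff, so $K_Z+A_Z$ is big, a contradiction (this is Theorem \ref{thm:kod=0}, applied to the MRC base). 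Your canonical-bundle-formula variant can be completed along the same lines, but only if you retain an ample horizontal piece: e.g.\ after perturbation write $\D+D\ge\e g^*H$ and apply the lc-trivial fibration formula to $K_X+(\D+D-\e g^*H)\sim_\Q g^*(-\e H)$, getting $K_Z+B_Z+M_Z\sim_\Q-\e H$ with $B_Z\ge0$ and $M_Z$ pseff, contradicting pseudo-effectivity of $K_Z$ --- and note that the pseudo-effectivity (b-nefness) of the moduli part is itself a positivity-of-direct-images theorem, i.e.\ precisely the ingredient your write-up leaves implicit. As it stands, the proposal is incomplete at its decisive step.
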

Notice that if $X$ is as in the Example \ref{ex:cone}
then $\nn(-K_X)=E$.

Another immediate consequence of Theorem \ref{thm:main} is the following generalization of Zhang's result \cite[Theorem 1]{zhang} to arbitrary pairs.
\begin{cor}\label{cor:main2}
Let $(X,\D)$ be a pair such that $-(K_X+\D)$ is big and nef. Then $X$ is rationally connected modulo the non-klt locus $\Nklt(X,\D)$.
\end{cor}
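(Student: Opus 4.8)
The plan is to obtain Corollary~\ref{cor:main2} as a direct specialization of Theorem~\ref{thm:main}, the only extra ingredient being the characterization of nefness recalled in the introduction: $\nn(D)=\emptyset$ if and only if $D$ is nef.

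First I would note that, since $-(K_X+\D)$ is nef by hypothesis, we have $\nn(-(K_X+\D))=\emptyset$, so that the locus occurring in Theorem~\ref{thm:main},
\[
\nn(-(K_X+\D))\cup\Nklt(X,\D),
\]
is simply $\Nklt(X,\D)$. Next, since $-(K_X+\D)$ is also big, Theorem~\ref{thm:main} applies to $(X,\D)$ and produces an irreducible component $V$ of $\nn(-(K_X+\D))\cup\Nklt(X,\D)=\Nklt(X,\D)$ such that through a general point $x\in X$ there passes a rational curve $R_x$ with $R_x\cap V\neq\emptyset$. As $V\subseteq\Nklt(X,\D)$, the curve $R_x$ meets $\Nklt(X,\D)$, which is exactly the assertion that $X$ is rationally connected modulo the non-klt locus.

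I do not expect any genuine obstacle here: all the substance is in Theorem~\ref{thm:main}, and the corollary is a bookkeeping consequence. The only points meriting a line of care are (i) that the component $V$ furnished by Theorem~\ref{thm:main} automatically lies inside $\Nklt(X,\D)$ once the non-nef locus is known to vanish, and (ii) the degenerate case $\Nklt(X,\D)=\emptyset$, in which $(X,\D)$ is klt with $-(K_X+\D)$ big and nef and the conclusion is to be read as ordinary rational connectedness — this is precisely Zhang's theorem \cite[Theorem~1]{zhang}, which the statement thus recovers.
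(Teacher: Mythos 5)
Your argument is correct and coincides with the paper's own (implicit) reasoning: the corollary is stated there as an immediate consequence of Theorem \ref{thm:main}, precisely because nefness of $-(K_X+\D)$ forces $\nn(-(K_X+\D))=\emptyset$, so the locus in Theorem \ref{thm:main} reduces to $\Nklt(X,\D)$. Your added remarks on the component $V$ and the degenerate case $\Nklt(X,\D)=\emptyset$ (recovering Zhang's theorem) are sensible and consistent with the paper.
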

Again by Example \ref{ex:cone}, one can see that the previous statement is optimal. Indeed, take $X$ as in Example \ref{ex:cone}, and $\Delta$ equal to the exceptional divisor $E$. Then $X$ is not rationally connected, but it is rationally connected modulo 
$\Nklt(X,\D)=E$.

It may be interesting to point out that, thanks to the work of Campana \cite{cam-pi1}, from  Theorem \ref{thm:main} we also deduce the following. 

\begin{cor}\label{cor:pi1}
Let $(X,\D)$ be a pair such that $-(K_X+\D)$ is big. Let $V$ be an irreducible component  of $\nn(-(K_X+\D))\cup \Nklt(X,\D)$ such that $X$ is rationally connected modulo  $V$. Then, if $V'$ is a desingularization of $V$, the image of $\pi_1(V')$ inside  $\pi_1(X)$ has finite index.
\end{cor}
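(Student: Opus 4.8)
The corollary already has the rational connectedness property built into its hypothesis, so the plan is simply to feed that property into Campana's description of the fundamental group via the rationally connected quotient. By assumption there is a dense open $U\subseteq X$ such that through every $x\in U$ there passes a rational curve $R_x$ meeting $V$, and these curves cover a dense open subset of $X$. First I would pass to a resolution $\mu\colon\hat X\to X$, chosen so that $\mu$ is an isomorphism over a dense open subset of $X$ and so that the maximal rationally connected fibration of $\hat X$ is an actual morphism $\phi\colon\hat X\to Z$ onto a smooth projective base $Z$ (which is then not uniruled). For general $x\in U$ the proper transform $\hat R_x$ of $R_x$ is a rational curve through a general point $\hat x$ of $\hat X$; since the fibre of $\phi$ through a general point of $\hat X$ contains every rational curve through that point, $\phi$ contracts $\hat R_x$ to the point $\phi(\hat x)$. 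As $\hat R_x$ also meets $\mu^{-1}(V)$, we get $\phi(\hat x)\in\phi(\mu^{-1}(V))$ for a dense set of $x$, so $\phi$ sends $\mu^{-1}(V)$ onto a dense subset of $Z$; fix an irreducible component $W$ of $\mu^{-1}(V)$ that dominates $Z$.

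Next I would carry out the fundamental group computation. Let $W'\to W$ be a resolution; composing with the dominant morphism $\phi|_W\colon W\to Z$ yields a dominant morphism $\psi\colon W'\to Z$, and its Stein factorisation $W'\to Z_0\to Z$, with $Z_0\to Z$ finite of some degree $d$, shows that the image of $\psi_*\colon\pi_1(W')\to\pi_1(Z)$ has index at most $d$ in $\pi_1(Z)$. The essential input from Campana's work \cite{cam-pi1} is that, for a smooth projective variety, the MRC fibration induces an isomorphism $\phi_*\colon\pi_1(\hat X)\to\pi_1(Z)$ --- for our purposes it would even suffice that $\ker\phi_*$ be finite. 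Since, up to the chosen resolutions, $\psi_*$ is the composite $\pi_1(W')\to\pi_1(\hat X)\xrightarrow{\phi_*}\pi_1(Z)$, the subgroup $\phi_*^{-1}\bigl(\Im(\psi_*)\bigr)=\Im\bigl(\pi_1(W')\to\pi_1(\hat X)\bigr)$ has finite index in $\pi_1(\hat X)$. Finally, $\mu$ induces a surjection $\pi_1(\hat X)\twoheadrightarrow\pi_1(X)$ (a resolution of a normal variety is surjective on fundamental groups), and since $\mu(W)\subseteq V$, resolving the rational map $W\dra V'$ produces a factorisation $\pi_1(W')\to\pi_1(V')\to\pi_1(X)$ of the natural map $\pi_1(W')\to\pi_1(\hat X)\to\pi_1(X)$. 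Hence $\Im\bigl(\pi_1(V')\to\pi_1(X)\bigr)$ contains the image under the surjection $\pi_1(\hat X)\to\pi_1(X)$ of the finite-index subgroup $\Im\bigl(\pi_1(W')\to\pi_1(\hat X)\bigr)$, and therefore has finite index in $\pi_1(X)$, as desired.

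The step I expect to be the crux is the input quoted from \cite{cam-pi1}, namely controlling $\pi_1(\hat X)$ by means of $\pi_1$ of the base of the MRC fibration. The remaining work is essentially bookkeeping --- checking that the maps (some only rational) linking $W'$, $\hat X$, $V'$ and $Z$ induce well-defined and mutually compatible homomorphisms between the fundamental groups of suitable common resolutions --- but it is precisely this bookkeeping that makes the otherwise very short diagram chase of the previous paragraph rigorous.
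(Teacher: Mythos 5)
The paper's own proof is a one-line citation of Campana's Theorem 2.2 in \cite{cam-pi1}, which is directly a statement of the required form (in the language of the $\Gamma$-reduction/Shafarevich map): roughly, if through a general point of a compact K\"ahler manifold $X$ there is a chain of rational curves meeting a fixed subvariety $V$, then the image of $\pi_1$ of a desingularization of $V$ in $\pi_1(X)$ has finite index. Your proposal instead re-derives this from scratch by routing through the MRC fibration, and the overall strategy is sound; but a few points are worth flagging. First, the input you quote from \cite{cam-pi1} -- that the MRC fibration induces an isomorphism $\phi_*\colon\pi_1(\hat X)\to\pi_1(Z)$ -- is not actually the statement Campana proves in that reference; what is there is the $\Gamma$-reduction machinery, and the $\pi_1$-isomorphism for the MRC quotient is a later result (due to Koll\'ar, and independently Campana, and it in turn relies on simple connectedness of rationally connected manifolds), so if you go this way you need a different citation and should be aware the MRC fibration is a priori only rational, so one must check the resolved model still has rationally connected general fibre and that the isomorphism survives the resolution. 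Second, the asserted set equality $\phi_*^{-1}(\Im\psi_*)=\Im(\pi_1(W')\to\pi_1(\hat X))$ holds only under the full isomorphism hypothesis; in general one has $\phi_*^{-1}(\Im\psi_*)=\Im(\pi_1(W')\to\pi_1(\hat X))\cdot\ker\phi_*$, which is what you in fact need (and which is finite index as soon as $\ker\phi_*$ is finite, matching your parenthetical remark). So the logic goes through, but as written the equality is stronger than what you justify; better to say directly that $\Im(\pi_1(W')\to\pi_1(\hat X))$ has index at most $[\pi_1(Z):\Im\psi_*]\cdot|\ker\phi_*|$. In short: your argument works but is considerably longer and relies on a nontrivial theorem that is not in the cited reference, whereas the paper's proof is an immediate appeal to a tailor-made result of Campana.
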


Theorem \ref{thm:main} is a special case of a more general statement, which may be regarded as a generalization of the main result of Zhang in \cite{zhang2}. Namely we have the following.
\begin{thm}\label{thm:gen}
Let $(X,\D)$ be a pair such that $-(K_X+\D)$ is pseff. 
Let $f:X\dra Z$ be a dominant rational map  with connected fibers.
Suppose that
\begin{equation}\label{eq:assumption}
\textrm {the restriction of  $f$ to $\nn(-(K_X+\D))\cup \Nklt(X,\D)$ does not dominate $Z.$}
\end{equation} 
Then, either 
(i) the variety $Z$ is uniruled; or
(ii) $\kappa(Z)=0$.
Moreover, in the case  $-(K_X+\D)$ is big, under the same assumption (\ref{eq:assumption}) we necessarily have that $Z$ is uniruled.  
\end{thm}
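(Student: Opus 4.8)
The plan is to follow, and extend to the pseudo-effective and big setting, the strategy used by Zhang in \cite{zhang2} for the nef case. \emph{First, reduce to a morphism.} Resolve the indeterminacy of $f$ and simultaneously log-resolve $(X,\D)$, modifying $\D$ by the appropriate combination of exceptional divisors so that $-(K_X+\D)$ stays pseff (resp. big); one checks that $\nn(-(K_X+\D))$ and $\Nklt(X,\D)$ transform compatibly and that hypothesis \eqref{eq:assumption} is preserved, so we may assume $f\colon X\to Z$ is a morphism of smooth projective varieties. The crucial consequence of \eqref{eq:assumption} is the following observation: if $C$ is an irreducible curve with $C\not\subset\nn(D)$, then $D\cdot C\ge 0$ (otherwise $(mD+A)\cdot C<0$ for $m\gg 0$ forces $C\subset\sbs(mD+A)\subset\nn(D)$). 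Hence a general fibre $F$ of $f$ avoids $\nn(-(K_X+\D))\cup\Nklt(X,\D)$, so that $(F,\D|_F)$ is klt and $-(K_F+\D|_F)=-(K_X+\D)|_F$ is nef — and even nef and big when $-(K_X+\D)$ is big, in which case $F$ is rationally connected by \cite{zhang} (or \cite{HM}).

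\emph{The dichotomy.} Assume $Z$ is not uniruled; by the theorem of Boucksom--Demailly--P\u{a}un--Peternell, $K_Z$, and hence $f^*K_Z$, is pseff. We shall prove $K_Z\equiv 0$, which by Nakayama's theorem (\cite{N}: a pseff divisor of numerical dimension zero has Kodaira dimension zero) yields $\kappa(Z)=0$ and settles alternative (ii). Since $-(K_F+\D|_F)$ is nef and $(F,\D|_F)$ is klt, either $\kappa(F,K_F+\D|_F)=-\infty$, or some positive multiple of $K_F+\D|_F$ is linearly equivalent to an effective divisor, which is then an effective anti‑nef divisor, hence $0$, so that $K_F+\D|_F\sim_\Q 0$. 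In this second, log Calabi--Yau, case $f$ is a klt‑trivial fibration, and the canonical bundle formula of Fujino--Mori and Ambro gives, on a suitable birational model of $Z$, $K_X+\D\sim_\Q f^*(K_Z+B+M)$ with $B\ge 0$ the discriminant part and $M$ a nef moduli part; then $-(K_Z+B+M)$ is pseff while $K_Z$, $B$, $M$ are each pseff, so their numerically trivial sum forces $K_Z\equiv B\equiv M\equiv 0$. If here $-(K_X+\D)$ were big it would be numerically a pull‑back from $Z$, impossible unless $\dim Z=\dim X$; so in the big case this branch occurs only when $f$ is birational, and there $-K_Z$ is big, contradicting $K_Z$ pseff.

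\emph{The uniruled fibre case.} If $\kappa(F,K_F+\D|_F)=-\infty$ then $F$ is uniruled: otherwise $K_F$, hence $K_F+\D|_F$, would be pseff and anti‑nef, so numerically trivial, contradicting $\kappa=-\infty$ via abundance in numerical dimension zero. One then replaces $f$ by the relative maximal rationally connected fibration of $X$ over $Z$, obtaining after a resolution $X'\dra Z'\to Z$ in which $X'\dra Z'$ has rationally connected general fibres and $Z'\to Z$ has non‑uniruled general fibres; one checks that $-(K_{X'}+\D')$ is again pseff (resp. big), that \eqref{eq:assumption} is inherited by $X'\dra Z'$, and that the conclusion descends to $Z$ along $Z'\to Z$. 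As the relative dimension strictly decreases, this closes an induction reducing everything to the log Calabi--Yau case above. For the stronger conclusion when $-(K_X+\D)$ is big, the general fibres of $f$ are already rationally connected, and one argues instead that if $Z$ were not uniruled then $f$ would coincide, up to birational equivalence, with the MRC fibration of $X$; the bigness of $-(K_X+\D)$ is then used, through a bend‑and‑break argument (hypothesis \eqref{eq:assumption} keeping the relevant base loci off the general fibre), to produce through a general point of $X$ a rational curve not contracted by $f$, contradicting the maximality of the MRC fibration — so $Z$ must be uniruled.

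\emph{Main obstacle.} The hard part is precisely the uniruled fibre case: there $K_{X/Z}+\D$ carries no global positivity and $-(K_X+\D)$ cannot simply be pushed down to $Z$; one must instead control the relative MRC reduction (and in particular the descent along $Z'\to Z$) and, in the big case, run the bend‑and‑break step. Compounding this, the whole scheme has to be carried out for pairs $(X,\D)$ that need not be klt, carefully tracking $\nn(-(K_X+\D))$ and hypothesis \eqref{eq:assumption} through all the birational modifications. By contrast, the log Calabi--Yau case (the canonical bundle formula plus ``pseff $+$ anti‑pseff $\Rightarrow$ numerically trivial''), the reduction to a morphism, and the appeals to \cite{zhang}, \cite{HM}, \cite{N} and Boucksom--Demailly--P\u{a}un--Peternell are comparatively formal. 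It is probably cleanest to prove Theorem \ref{thm:gen} and Theorem \ref{thm:main} together by induction on $\dim X$, so as to avoid any circularity in the reduction steps.
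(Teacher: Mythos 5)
Your reduction to a morphism, the observation that the very general fibre $F$ avoids $\nn(-(K_X+\D))\cup\Nklt(X,\D)$ so that $(F,\D|_F)$ is klt with $-(K_F+\D|_F)$ nef, and the log Calabi--Yau branch (canonical bundle formula, pseff $+$ anti-pseff $\Rightarrow$ numerically trivial, then Nakayama) are fine in outline, and they constitute a genuinely different route from the paper for that branch. But the proposal has a real gap exactly where you flag the ``main obstacle'': the case $\kappa(F,K_F+\D|_F)=-\infty$. There you only sketch a replacement of $f$ by the relative MRC fibration $X'\dra Z'\to Z$ plus an induction, and the key step ``the conclusion descends to $Z$ along $Z'\to Z$'' is not an elementary check: from $\kappa(Z')=0$ and $Z'\to Z$ a fibration with non-uniruled general fibre $G$ you cannot conclude $\kappa(Z)=0$ without a $C_{n,m}$-type subadditivity statement, and non-uniruledness of $G$ only gives $K_G$ pseff (BDPP), not $\kappa(G)\ge 0$, so even the conjectural inequality would not obviously close the loop. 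Likewise, in the big case your entire argument rests on an unspecified ``bend-and-break'' producing, through a general point, a rational curve not contracted by $f$; no negativity or deformation estimate is indicated, and this is precisely the hard content of the theorem, not a routine step.

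What is missing is the paper's central maneuver, which makes the case division (log CY fibre versus uniruled fibre) unnecessary: Proposition \ref{prop:star}. Using asymptotic multiplier ideals and the non-nef locus hypothesis, one produces an effective $D\sim_\Q -(K_X+\D)+\d H_X$ (no perturbation needed when $-(K_X+\D)$ is big) with $\Nklt(X,\D+D)\subset\nn(-(K_X+\D))\cup\Nklt(X,\D)$, hence not dominating $Z$. Replacing $\D$ by $\D+D$ makes the log canonical divisor $\sim_\Q\d H_X$ (resp. $\sim_\Q 0$), with klt restriction to the general fibre; after a resolution, Campana's positivity of direct images (Theorem \ref{thm:campana}, via Corollary \ref{cor:campana}) applies uniformly and yields sections of $b(K_{Y/Z}+B_Y+\G+g^*(\d H_Z))$. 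Intersecting with a mobile curve pulled back from $X$ and letting $\d\to 0$ gives $C\cdot g^*K_Z\le 0$, whence uniruledness (Miyaoka--Mori) or, via the divisorial Zariski decomposition and Nakayama, $\kappa(Z)=0$; in the big case the same positivity gives an injection of $H^0(Z,mb(K_Z+A_Z))$ into a space of Kodaira dimension $0$, contradicting bigness of $K_Z+A_Z$ if $K_Z$ were pseff. Without Proposition \ref{prop:star} (or an equivalent device turning the hypothesis on $\nn(-(K_X+\D))\cup\Nklt(X,\D)$ into an almost log-trivial klt structure on the general fibre), your scheme does not yet constitute a proof.
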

Notice that the possibility (ii) in Theorem \ref{thm:gen} does occur, as one can see by taking $X$ to be the product of a projective space and an abelian variety $Z$. To put Theorem \ref{thm:gen} 
into perspective, notice that in general the image of a variety $X$ with pseff anticanonical divisor may be of general type.  Indeed we have the following instructive example (due to Zhang  \cite[Example, pp. 137--138]{zhang}).

\begin{ex}\label{ex:zhang} Let $C$ be a  smooth curve of arbitrary genus $g$. Let $A$ be an ample line bundle with $\deg(A)>2\deg(K_C)$. Consider the surface 
$S:=\P(\cO_C(A)\oplus\cO_C)$, together with the natural projection $\pi:S\to C$. Then : (i) $-K_S$ is big, but not nef; (ii) for any integer $m>0$ the linear system $|-mK_S|$ contains a fixed component dominating the base $C$. 
\end{ex}

In the smooth case the proof  of Theorem \ref{thm:gen} yields a better result, which is optimal by Example \ref{ex:zhang}.
\begin{thm}\label{thm:genlisse}
Let $X$ be a smooth projective variety  such that $-K_X$ is pseff. 
Let $f:X\dra Z$ be a dominant rational map  with connected fibers. 
Suppose that
\begin{equation}\label{eq:smoothassumption}
\textrm {the restriction of  $f$ to $\Cosupp(\cJ(||-(K_X+\D)||))$ does not dominate $Z.$}
\end{equation} 
Then, either 
(i) the variety $Z$ is uniruled; or
(ii) $\kappa(Z)=0$.
Moreover, in the case  $-(K_X+\D)$ is big, under the same assumption (\ref{eq:smoothassumption}) we necessarily have that $Z$ is uniruled.  
\end{thm}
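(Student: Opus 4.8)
We outline a proof, along the lines that also yield Theorems~\ref{thm:main} and~\ref{thm:gen}. The idea is to combine the characterization of uniruledness through the pseudo-effectivity of the canonical class (Boucksom--Demailly--Paun--Peternell) with the relative MRC fibration and Kawamata's canonical bundle formula, using hypothesis (\ref{eq:smoothassumption}) -- which in this smooth setting concerns $\Cosupp(\cJ(||-K_X||))$ -- to control the anticanonical class of the MRC quotient. If $Z$ is uniruled we are in case~(i), so assume $Z$ is not uniruled; then $K_Z$ is pseudo-effective by that theorem, and it suffices to prove $\kappa(Z)=0$ and to reach a contradiction when $-K_X$ is big. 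We first record the case $f=\Id$: if $W$ is a smooth projective variety with $-K_W$ pseudo-effective and $W$ not uniruled, then $K_W$ is pseudo-effective too, hence $K_W\equiv 0$ (pair $\pm K_W$ with $A^{\dim W-1}$ for $A$ ample and use that $\overline{\Eff}(W)$ contains no line), so $\kappa(W)=0$ by abundance in numerical dimension zero \cite{N}; moreover that pairing is impossible if $-K_W$ is big, so then $W$ is uniruled.

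I would then replace $X$ by a resolution on which $f$ is a morphism -- harmless, since $\kappa(Z)$, uniruledness of $Z$ and condition (\ref{eq:smoothassumption}) are unaffected, the latter by the birational behaviour of $\cJ(||{\cdot}||)$ (working with the pull-back of $-K_X$, still pseudo-effective) -- and form the \emph{relative MRC fibration over $Z$}: a triangle $X\dra Y\xrightarrow{g}Z$ with $f=g\circ\rho$, where (after blowing up so that $Y$ is smooth) $\rho\colon X\dra Y$ has rationally connected general fibres and $g\colon Y\to Z$ has general fibre $G$ not uniruled \cite{KMM},\cite{Cam1}. The crux is then the claim that \emph{$-K_Y$ is pseudo-effective, and big if $-K_X$ is big}; this is the step I expect to be the main obstacle. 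It is precisely here that (\ref{eq:smoothassumption}) is indispensable: dropping it makes the claim false, as Example~\ref{ex:zhang} shows -- there the relative MRC fibration over $C$ is $\pi\colon S\to C$ itself ($Y=C$), while $-K_C$ is not pseudo-effective as soon as $g(C)\ge 1$, the reason being exactly that $|-mK_S|$ has a fixed part dominating $C$, i.e. $\Cosupp(\cJ(||-K_S||))$ dominates $C$.

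To establish the claim I would argue as follows. As $\rho$ is a rationally connected fibration, its general fibres are covered by rational curves $R$ with $-K_X\cdot R>0$; so on a common resolution $p\colon W\to X$, $q\colon W\to Y$ one expects $p^{*}(-K_X)$ and $q^{*}(-K_Y)$ to differ only by a divisor supported on $q$-vertical and $p$-exceptional prime divisors, plus an effective, $q$-exceptional contribution coming from $K_{X/Y}$ along the fibres of $\rho$. The function of (\ref{eq:smoothassumption}) is to ensure that the divisorial negative part $N_{\sigma}(-K_X)$ of $-K_X$ (in the divisorial Zariski decomposition of Nakayama \cite{N} and Boucksom) has no component dominating $Y$ -- the pathology exhibited by Example~\ref{ex:zhang} -- so that the movable part $P_{\sigma}(-K_X)$ descends along $q$ to a pseudo-effective (resp. big) class that represents $-K_Y$ modulo vertical divisors. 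Carrying this out rigorously demands a careful choice of models together with the functoriality and restriction properties of $\cJ(||{\cdot}||)$, and is the technical heart of the proof.

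Granting the claim, the rest is soft. The restriction of the pseudo-effective class $-K_Y$ to the general fibre $G$ of $g$ is again pseudo-effective \cite{N}, i.e. $-K_G=(-K_Y)|_G$ is pseudo-effective; since $G$ is not uniruled, the case $f=\Id$ gives $\kappa(G)=0$. Kawamata's canonical bundle formula for $g$ (general fibre of Kodaira dimension $0$) then yields $K_Y\sim_{\Q}g^{*}(K_Z+B_Z+M_Z)$ with $B_Z\ge 0$ and $M_Z$ pseudo-effective, so $-g^{*}K_Z\sim_{\Q}-K_Y+g^{*}B_Z+g^{*}M_Z$ is pseudo-effective and hence $-K_Z$ is pseudo-effective. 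Applying the case $f=\Id$ once more, now to $Z$: either $Z$ is uniruled -- excluded -- or $\kappa(Z)=0$, so $\kappa(Z)=0$, which is~(ii). Finally, if $-K_X$ is big then the claim gives $-K_Y$ big, hence $-K_G$ and $-K_Z$ big, and the ``moreover'' in the case $f=\Id$ forces $Z$ uniruled, against our assumption; this proves the last statement.
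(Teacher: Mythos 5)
Your strategy --- pass to the relative MRC fibration $X\dra Y\xrightarrow{\;g\;}Z$ and descend the positivity of $-K_X$ to $-K_Y$, then finish with the canonical bundle formula --- is genuinely different from the paper's, but it rests entirely on the claim you yourself flag as the main obstacle, namely that $-K_Y$ is pseudo-effective (and big when $-K_X$ is big), and that claim is not proved. The heuristic offered is not an argument: on a common resolution $p,q$ the discrepancy between $p^*K_X$ and $q^*K_Y$ is governed by $K_{X/Y}$, which is a genuinely horizontal class, not a combination of $q$-vertical, $p$-exceptional and effective $q$-exceptional divisors, and there is no general mechanism by which the movable part $P_\sigma(-K_X)$ ``descends along $q$'' to represent $-K_Y$; controlling $K_{X/Y}$ over the base is precisely what requires the positivity-of-direct-images theorems (Kawamata, Viehweg, Koll\'ar, Campana), which your sketch never invokes. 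Worse, in the big case the claim can only hold when $g$ is generically finite: if $\dim Y>\dim Z$, the general fibre $G$ of $g$ is a positive-dimensional non-uniruled variety, and $-K_G=(-K_Y)|_G$ big would force $G$ uniruled by your own case $f=\Id$. So establishing the claim already amounts to showing that the general fibre of $f$ is rationally connected, i.e.\ essentially the hard content of the theorem; as written it is a black box containing all the difficulty. (A secondary point: for $g$ with general fibre of Kodaira dimension $0$, rather than $K$-trivial fibres, the canonical bundle formula with $B_Z\ge 0$ and $M_Z$ pseudo-effective plus an effective horizontal correction also needs justification, though this is minor by comparison.)

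The paper avoids any statement about an intermediate quotient. Its proof of Theorem \ref{thm:genlisse} is structurally identical to that of Theorem \ref{thm:gen}: using Proposition \ref{prop:star_lisse} (i.e.\ \cite[Example 9.3.57 (i)]{l2}) one produces an effective $D\sim_\Q -K_X+\d H_X$ with $\Nklt(X,D)\subset\Cosupp(\cJ(\|-K_X\|))$, so by hypothesis (\ref{eq:smoothassumption}) and generic restriction of multiplier ideals the relevant pair is klt on the general fibre of $f$; then, on a resolution $g:Y\to Z$ of $f$, Campana's weak positivity (Corollary \ref{cor:campana}) yields sections of $b(K_{Y/Z}+B_Y+\G+g^*(\d H_Z))$, and intersecting with a mobile curve pulled back from $X$ and letting $\d\to 0$ gives $C\cdot g^*K_Z\le 0$; one concludes via \cite{MM}, \cite{bdpp} and Nakayama's $\kappa_\sigma=0\Rightarrow\kappa=0$ \cite{N}, and in the big case via an injection of pluricanonical sections forcing $\kappa(Z,K_Z+A_Z)\le 0$, which contradicts $K_Z$ pseudo-effective. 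Until you supply a proof of your descent claim (which would in effect require re-doing this direct-image positivity argument), the proposal does not establish the theorem.
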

As a by-product of the previous result we obtain, in the smooth case, the following improvement of Theorem
\ref{thm:main}.
\begin{thm}\label{thm:lisse}
Let $X$ be a smooth projective variety such that $-K_X$ is big. Then $X$ is rationally connected modulo the  locus $\Cosupp(\cJ (X,||-(K_X)||))$.
\end{thm}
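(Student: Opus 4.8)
The plan is to derive Theorem \ref{thm:lisse} from Theorem \ref{thm:genlisse} by a standard maximal-rationally-connected (MRC) fibration argument, exactly paralleling how Theorem \ref{thm:main} should follow from Theorem \ref{thm:gen}. First I would fix a smooth birational model of $X$ (which does not change the multiplier ideal sheaf $\cJ(||-K_X||)$ up to the relevant birational equivalence, nor the property that $-K_X$ is big) and pass to the MRC quotient $f\colon X\dra Z$ of \cite{Cam1}, \cite{KMM}, so that the general fiber of $f$ is rationally connected and $Z$ is \emph{not} uniruled (by Graber--Harris--Starr, or rather the characterization that the base of the MRC fibration is not uniruled). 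The goal is then to show that $\dim Z=0$, i.e. $X$ itself is rationally connected; granting this, rational connectedness modulo any subset is automatic, so in particular modulo $\Cosupp(\cJ(X,||-K_X||))$ — but more precisely one wants the relative statement when $\dim Z>0$ is \emph{a priori} possible, namely that the general fiber, together with $V:=\Cosupp(\cJ(X,||-K_X||))$, joins a general point of $X$ to $V$; so I should organize the argument as a dichotomy.

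The key step is to feed $f$ into Theorem \ref{thm:genlisse}. There are two cases depending on whether assumption (\ref{eq:smoothassumption}) holds, i.e. whether the restriction of $f$ to $\Cosupp(\cJ(||-K_X||))$ dominates $Z$. In the case where it does \emph{not} dominate, Theorem \ref{thm:genlisse} applies directly: since $-K_X$ is big we conclude $Z$ is uniruled, contradicting the defining property of the MRC quotient unless $\dim Z=0$. Hence $X$ is rationally connected and the conclusion of Theorem \ref{thm:lisse} holds trivially. In the remaining case, $V=\Cosupp(\cJ(X,||-K_X||))$ \emph{does} dominate $Z$ via $f$; then a general fiber $F$ of $f$ meets $V$, and since $F$ is rationally connected, through a general point $x\in X$ (which lies on a general fiber $F_x$) there is a chain of rational curves inside $F_x$ connecting $x$ to a point of $F_x\cap V$. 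Concatenating gives a rational curve through $x$ meeting $V$, which is precisely the assertion ``$X$ is rationally connected modulo $V$''.

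A technical point to handle with care is that the MRC fibration is only a rational map, so ``general fiber'' must be taken over a suitable open subset of $Z$ over which $f$ is a morphism and the fibers are irreducible and rationally connected; one also needs that a general point of $X$ maps into this open set and lies on such a fiber, which is standard. Similarly, when I say $V$ dominates $Z$ I should pass to the closure of the image and argue that the general fiber of $f|_V$ (or rather that $\overline{f(V)}=Z$ forces $F\cap V\ne\emptyset$ for general $F$) — again routine. One should also make sure the multiplier ideal locus $\Cosupp(\cJ(X,||-K_X||))$ is intrinsic enough that replacing $X$ by a resolution (if needed to make $f$ nicer) is harmless; since $X$ is already assumed smooth this is not an issue, but one does want to note that the asymptotic multiplier ideal $\cJ(||-K_X||)$ is well-defined as $-K_X$ is big hence pseudo-effective.

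The main obstacle I anticipate is not in this reduction — which is essentially formal — but in correctly setting up the case distinction so that Theorem \ref{thm:genlisse} can be invoked: specifically, ensuring that in the ``non-dominating'' case the hypothesis (\ref{eq:smoothassumption}) is literally satisfied for the MRC map $f$, and in the ``dominating'' case extracting from ``$F$ rationally connected and $F\cap V\neq\emptyset$'' a single rational curve (or connected chain of rational curves, which suffices for the notion ``rationally connected modulo $V$'') through a general point of $X$ meeting $V$. Since a rationally connected variety is chain-connected by rational curves and any two points (here, a general point and a point of $V$) can be joined, this last step is immediate; the only care needed is that $x$ general in $X$ forces $x$ general in its fiber $F_x$, so that the joining curve can be chosen without passing through indeterminacy loci. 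I expect no serious difficulty, and the proof should be short.
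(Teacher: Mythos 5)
Your proposal is correct and is essentially the paper's own (implicit) derivation of Theorem \ref{thm:lisse}: apply Theorem \ref{thm:genlisse} to the MRC fibration, use Graber--Harris--Starr to rule out a uniruled base and thus force $\Cosupp(\cJ(X,\|-K_X\|))$ to dominate it, and then connect a general point to that locus by a rational curve inside the (smooth, rationally connected) general fiber, exactly as in the paper's proof of Theorem \ref{thm:main}. The technical points you flag (rational map vs.\ morphism, general fibers, single curve vs.\ chain) are handled at the same level of detail in the paper, so there is no gap.
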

Again, notice that if $X$ is as in the Example \ref{ex:cone} then $E=\nn(-K_X)$ is equal to $\Cosupp(\cJ (X,||-(K_X)||))$. 

Moreover,  as remarked by Zhang in \cite{zhang2} and \cite{zhang}, Theorems \ref{thm:gen} and  \ref{thm:genlisse} allow to obtain the following informations on the geometry of the Albanese map, which may be seen as generalizations of \cite[Corollary 2]{zhang2} and \cite[Corollary 3]{zhang}. 
\begin{cor}\label{cor:alb}
Let $(X,\D)$ be a pair such that $-(K_X+\D)$ is pseff. 
Let $\Alb_X:X\dra \Alb(X)$ be the Albanese map (from any smooth model of $X$). 
Suppose that
$\nn(-(K_X+\D))\cup \Nklt(X,\D)$ does not dominate $\Alb(X)$. Then the Albanese map is dominant with connected fibers.  
\end{cor}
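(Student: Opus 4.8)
The plan is to apply Theorem~\ref{thm:gen} to the Albanese map and then to upgrade its two possible conclusions into the single strong conclusion of the corollary, using the classical structure theory of Albanese maps. Concretely, I would start from the Albanese map $\Alb_X\colon X\dra\Alb(X)$ and take the Stein factorisation of a morphism resolving it, obtaining a fibration (dominant, connected fibers) $f\colon X\dra Z$ together with a finite morphism $\mu\colon Z\to\Alb(X)$ such that $\Alb_X=\mu\circ f$. Write $W:=\mu(Z)=\overline{\Alb_X(X)}$. Because $\mu$ is finite, a subvariety of $X$ is carried by $f$ onto a dense subset of $Z$ exactly when its image under $\Alb_X$ is dense in $W$; hence the hypothesis that $\nn(-(K_X+\D))\cup\Nklt(X,\D)$ does not dominate $\Alb(X)$ is precisely assumption~\eqref{eq:assumption} for the fibration $f$. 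Theorem~\ref{thm:gen} then gives that either $Z$ is uniruled, or $\kappa(Z)=0$.

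Next I would discard the uniruled case: $\mu\colon Z\to W$ is finite and surjective, $W$ is a subvariety of an abelian variety and therefore contains no rational curve, so a rational curve through a general point of $Z$ would be contracted by the finite map $\mu$ --- impossible. Hence $\kappa(Z)=0$. Since the Kodaira dimension does not decrease along a generically finite dominant map, $0=\kappa(Z)\ge\kappa(W)$, while $\kappa(W)\ge0$ as $W$ lies in an abelian variety; thus $\kappa(W)=0$. By Ueno's theorem a subvariety of an abelian variety of Kodaira dimension $0$ is a translate of an abelian subvariety, and since the image of the Albanese map generates $\Alb(X)$ this forces $W=\Alb(X)$. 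In particular $\Alb_X$ is already dominant, which settles the first half of the statement.

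It remains to see that $\mu$ is an isomorphism, equivalently that $f$ --- hence $\Alb_X$ --- has connected fibers. Passing to a resolution $Z'\to Z$ and lifting $f$, the dominant maps $X\dra Z'$ and $Z'\to\Alb(X)$ induce surjective homomorphisms $\Alb(X)\twoheadrightarrow\Alb(Z')$ and $\Alb(Z')\twoheadrightarrow\Alb(X)$ whose composition is the homomorphism induced by $\Alb_X$, namely the identity of $\Alb(X)$; hence both are isomorphisms and, in particular, $\dim\Alb(Z')=\dim Z'$. On the other hand $\kappa(Z')=\kappa(Z)=0$, so by Kawamata's description of the Albanese map of a variety of Kodaira dimension $0$ the map $Z'\to\Alb(Z')$ is surjective with connected fibers; being moreover generically finite (equal dimensions) it is birational. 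Since $\mu$ factors as $Z'\to\Alb(Z')\xrightarrow{\ \sim\ }\Alb(X)$ through this map, $\mu$ is birational, and a finite birational morphism onto the smooth variety $\Alb(X)$ is an isomorphism.

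The reduction via Stein factorisation and the matching of the hypothesis with assumption~\eqref{eq:assumption} are routine; the substance is entirely in the case $\kappa(Z)=0$, where both the dominance and the connectedness of the fibers are squeezed out of two classical inputs, Ueno's classification of Kodaira-dimension-zero subvarieties of abelian varieties and Kawamata's theorem on the Albanese map of varieties of Kodaira dimension zero. I expect the most delicate point to be the connectedness of the fibers: one has to first identify $\Alb(Z')$ with $\Alb(X)$ via the universal property before Kawamata's theorem can be applied, and one must take care that all Kodaira-dimension comparisons are carried out on appropriate smooth models.
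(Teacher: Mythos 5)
Your proof is correct and follows essentially the same route as the paper: the paper's argument (via Corollary \ref{cor:alblisse}) likewise takes the Stein factorization of the Albanese map, rules out uniruledness of the base because abelian varieties contain no rational curves and the second factor is finite, deduces $\kappa=0$ from Theorem \ref{thm:gen}, and concludes by Ueno's theorem and Kawamata's theorem together with the universality of the Albanese map. Your write-up merely makes explicit the steps ($\kappa(W)=0$, $W=\Alb(X)$, $\Alb(Z')\cong\Alb(X)$, birationality and hence triviality of the finite part) that the paper compresses into the citations of \cite{U} and \cite{kawab}.
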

\begin{cor}\label{cor:alblisse}
Let $X$ be a smooth projective variety such that $-K_X$ is pseff. 
Let $\Alb_X:X\to \Alb(X)$ be the Albanese map. Suppose that
$\Cosupp(\cJ (X,||-(K_X)||))$ does not dominate $\Alb(X)$. Then the Albanese map is surjective with connected fibers.  
\end{cor}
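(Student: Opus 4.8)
The plan is to feed (a Stein factorization of) the Albanese map into Theorem~\ref{thm:genlisse}. Write $A:=\Alb(X)$, let $a:X\to A$ be the Albanese morphism, $Z_{0}:=a(X)$ its image, and take the Stein factorization $X\xrightarrow{g}Z'\xrightarrow{h}Z_{0}$, so that $g$ is a surjective morphism with connected fibers onto a normal projective variety $Z'$ and $h$ is finite and surjective. If $g$ restricted to a closed set $W\subseteq X$ dominates $Z'$, then $a(W)=h(g(W))$ dominates $Z_{0}$; contrapositively, the hypothesis — that $a$ restricted to $N:=\Cosupp(\cJ(X,\|-K_{X}\|))$ does not dominate the Albanese image — implies that $g$ restricted to $N$ does not dominate $Z'$, which is assumption~(\ref{eq:smoothassumption}) for the fibration $g:X\to Z'$. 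Theorem~\ref{thm:genlisse} then gives that either $Z'$ is uniruled or $\kappa(Z')=0$.

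I would rule out the uniruled alternative at once: $h$ maps $Z'$ finitely onto the subvariety $Z_{0}$ of the abelian variety $A$, and a subvariety of an abelian variety contains no rational curve, so neither does $Z'$. Hence $\kappa(Z')=0$. Since $h$ is generically finite, $0\le\kappa(Z_{0})\le\kappa(Z')=0$ (the left inequality because $Z_{0}\subseteq A$), so $\kappa(Z_{0})=0$; Ueno's structure theorem then makes $Z_{0}$ a translate of an abelian subvariety of $A$, and since the Albanese image generates $A$ this forces $Z_{0}=A$. This already gives the surjectivity of the Albanese map.

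It remains to show that $a$ has connected fibers, i.e. that $\deg h=1$. Passing to a resolution $\widetilde{Z'}\to Z'$, one has $\kappa(\widetilde{Z'})=0$, so by Kawamata's theorem the Albanese map $\widetilde{Z'}\to\Alb(\widetilde{Z'})$ is surjective with connected fibers and $\dim\Alb(\widetilde{Z'})\le\dim\widetilde{Z'}=\dim A$. On the other hand $\widetilde{Z'}\to Z_{0}=A$ is generically finite and factors, after translation, through a surjective morphism of abelian varieties $\psi:\Alb(\widetilde{Z'})\to A$, so $\dim\Alb(\widetilde{Z'})\ge\dim A$. Hence these dimensions are equal, the Albanese map of $\widetilde{Z'}$ is birational, $Z'$ is birational to the abelian variety $B:=\Alb(\widetilde{Z'})$, and $\psi:B\to A$ is an isogeny of degree $\deg h$. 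Since $X$ is smooth, the rational map $X\dra B$ built from $g$ and the birational map $Z'\dra B$ is actually a morphism $\phi:X\to B$, and $a=\psi\circ\phi$ by construction. The universal property of $\Alb(X)=A$ writes $\phi=\theta\circ a$ for a morphism $\theta:A\to B$; then $\psi\circ\theta\circ a=a$, and since $a$ is dominant, $\psi\circ\theta=\Id_{A}$. Thus $\psi$ is an isomorphism, $\deg h=1$, and $h$, being a finite birational morphism onto the normal variety $A$, is an isomorphism; therefore $a=h\circ g$ has connected fibers.

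The step I expect to be the real obstacle is this last one: upgrading ``$\kappa(Z')=0$, with $Z'$ finite over a subvariety of $\Alb(X)$'' to ``$Z'\xrightarrow{\sim}\Alb(X)$''. This is precisely where Kawamata's theorem on the Albanese map of varieties of Kodaira dimension zero, together with the universal property of the Albanese, are indispensable and admit no purely formal shortcut. The rest is bookkeeping: the reduction to Theorem~\ref{thm:genlisse} via Stein factorization, the remark that abelian varieties carry no rational curves (killing the uniruled case), and routine care with irreducibility and normality when forming the Stein factorization and resolving $Z'$. This runs parallel to Zhang's deduction of the corresponding statements in \cite{zhang2} and \cite{zhang} from his structure theorems.
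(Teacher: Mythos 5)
Your argument is correct and follows the paper's proof in essentially the same way: Stein factorization of the Albanese map, Theorem \ref{thm:genlisse} combined with the absence of rational curves on abelian varieties to force $\kappa=0$, Ueno's theorem to identify the image with an abelian subvariety (hence surjectivity), and Kawamata's theorem plus the universal property of the Albanese to get connected fibers. The only difference is one of exposition: the paper compresses the last two steps into citations of Ueno and Kawamata, whereas you spell out the isogeny/section argument explicitly.
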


Our approach to prove Theorem \ref{thm:gen} is similar to  \cite{zhang2}, \cite{zhang} and to \cite{HM},
 and goes as follows: using the hypotheses and multiplier ideal techniques we prove that there exists 
 an effective divisor $L\sim_\Q-(K_X+\D)$ such that the restriction $(\D+L)_{|X_z}$ to the general fiber of $f$ is klt (if $-(K_X+\D)$ is not big we actually add a small ample to it). Then, supposing for simplicity that $X$ and $Y$ are smooth, and $f$ regular, we invoke Campana's positivity result \cite[Theorem 4.13]{ca} to deduce that $f_*({K_X/Z}+\D+L)\sim_\Q -K_Z$ is weakly positive. Then by results contained in \cite{MM}, \cite{bdpp} and \cite{N} the variety $Z$ is forced to be either uniruled or to have zero Kodaira dimension. The technical difficulties come from the singularities of the pair, and from the indeterminacies of the map, but they may be overcome by working on an appropriate resolution. 
In particular, the detailed and very nice account   given by Bonavero in \cite[\S 7.7]{B} on how to reprove \cite{zhang} using the methods of \cite{HM} was of great help for us.  

{\bf Acknowledgements.} We thank Shigeharu Takayama for bringing to our attention the problem from which this work arose. 

%
\section{Preliminaries}\label{S:prel}
%

We recall some  basic definitions and results.
%
\subsection{Notation and conventions}
%
We work over the field of complex numbers.
Unless otherwise specified, a divisor will be integral and Cartier.  
If $D$ and $D'$ are $\Q$-divisors on a projective 
variety $X$ we write $D\sim_\Q D'$, and say that 
$D$ and $D'$ are $\Q$-linearly equivalent, if an integral non-zero multiple of $D-D'$ is linearly equivalent to zero. A divisor $D$ is {\it big} if a large multiple $mD$ induces a birational map $\phi_{mD}: X\dra \P H^0(X,\cO_X(mD))^*$. A divisor $D$ is {\it pseudo-effective} (or {\it pseff}) if its class belongs to the closure of the effective cone  $\overline{\Eff(X)}\subset N^1(X)$. For the basic properties of big and pseff divisors and of their cones see \cite[\S 2.2]{l1}. A variety $X$ is {\it uniruled} if there exists a dominant rational map 
$Y\times \P^1\dra X$ where $Y$ is a variety of dimension $\dim(Y)=\dim(X)-1$. As a combination of 
the deep results contained in \cite{MM} and \cite{bdpp} we have the following characterization of uniruledness : a proper algebraic variety $X$ is uniruled if, and only if, there exists a smooth projective variety $X'$ birational to $X$ whose canonical divisor $K_{X'}$ is not pseff.

%
\subsection{Pairs}
%
We need to recall some terminology about pairs.  Let $X$ a normal projective variety, $\D = \sum a_i D_i$ an effective Weil $\Q$-divisor. If $K_X + \D$ is $\Q$-Cartier, we say that $(X,D)$ is a pair. Consider a log-resolution $f : Y \rightarrow X$ of the pair $(X,\D)$, that is a proper birational morphism such that $Y$ is smooth and $f^* \D + E$ is a simple normal crossing divisor, where $E$ is the exceptional divisor of $f$. There is a uniquely defined exceptional divisor $\sum_j b_j E_j$ on $Y$ such that
$$K_Y = f^*(K_X + \D) - (f^{-1})_* \D + \sum_j b_j E_j,$$
where $(f^{-1})_* \D$ denotes the strict transform of $\D$ in $Y$.

We say that $(X,\D)$ is log-terminal or Kawamata log-terminal (klt for short) if for each $i$ and $j$, we have $a_i < 1$ and $b_j > -1$. If the previous inequalities are large, we say that the pair is log-canonical. We say that the pair $(X,\D)$ is klt or log-canonical at a point $x$ if these inequalities are verified for each of the divisors $D_i, E_j$ having a non empty intersection with $f^{-1}(\{x\})$. We call $\Nklt(X,\D)$ the set of points $x$ at which the pair $(X,\D)$ is not klt.
We refer the reader to \cite{ko} for a detailed treatment of this subject.
%
\subsection{Multiplier ideals}
%
We will freely use the language of multiplier ideals as described in \cite{l2}. We recall in particular that we can associate to a complete linear serie $|D|$ and a rational number $c$, an asymptotic multiplier ideal ${\mathcal J}(c \cdot \| D \| )$ defined as the unique maximal element for the inclusion in
$$\{ {\mathcal J} (\frac{c}{k} \cdot {\frak b}(|kD|) \}_{k \geqslant 1}.$$
For any integer $k\ge 1$, there is an inclusion ${\frak b}(|kD|) \subset {\mathcal J}(k \cdot \| D \| )$. One important property is that, when the divisor $D$ is big, the difference between the two ideals above is uniformly bounded in $k$ (see \cite[Theorem 11.2.21]{l2}  for the precise statement).
%
\subsection{Non-nef locus}
%

The non-nef locus (also called the restricted base locus) of  a divisor $L$ depend only on the numerical equivalence class of $L$ (see \cite{Na}). 
Moreover, by \cite[Corollary 2.10]{elmnp1} if $X$ is smooth 
we have that 
$$
\nn(L)=\cup_{m\in \N} \Cosupp(\cJ(X,||mL||)).
$$
As mentioned in the introduction $L$ is nef if, and only if, $\nn(L)=\emptyset$ and moreover $L$ is pseudo-effective if, and only if, $\nn(L)\not=X$.
For the proofs of the properties mentioned above and more details on non-nef and  stable base loci, see \cite[\S 1]{elmnp1} and  \cite[III, \S1]{N}. Note that though there is no known example, it is expected that $\nn(L)$ is not Zariski-closed in general. Nevertheless, when the algebra $R(X,D)$ is finitely generated, $\nn(L) = {\frak b}(|kD|)$ for a sufficiently large integer $k$ : apply \cite[Example 11.1.3]{l2} and \cite[Corollary 2.10]{elmnp1}.

\subsection{Positivity of direct images}
 
Recall the following positivity result for direct images, due to Campana \cite[Theorem 4.13]{ca}, which improves on previous results obtained by Kawamata \cite{kawab}, Koll\'ar \cite{higherkol} and Viehweg \cite{vie}. We refer the reader to \cite[Ch. 2]{viemod} for the definition and a detailed discussion of the notion of weak positivity, which will not be directly used in this paper.

\begin{thm}[Campana]\label{thm:campana}
Let $f:V'\to V$ be a morphism with connected fibres between smooth projective varieties. Let $\D$ be an
effective 
$\Q$-divisor on $V'$ whose restriction to the generic
fibre is log-canonical. Then, the sheaf 
$$
 f_* \cO_{V'} (m(K_{V'/V}+\D))
$$
is weakly positive for all positive integer $m$ such that $m\D$ is integral.
\end{thm}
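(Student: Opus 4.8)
Since the statement is Campana's theorem \cite{ca}, what follows is the strategy I would use to reconstruct its proof, along the line Fujita--Kawamata--Viehweg together with the covering tricks that handle the boundary $\D$.

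\textbf{Reduction of $\D$ to a convenient shape.} First I would split $\D=\D^h+\D^v$ into its horizontal and vertical parts relative to $f$. The vertical part is harmless: after replacing $V$ by a higher birational model and $V'$ by a log-resolution, twisting by a vertical divisor changes $f_*\cO_{V'}(m(K_{V'/V}+\D))$ only by a line bundle pulled back from a dense open subset of $V$ and by an effective modification along the exceptional locus, operations that do not affect weak positivity. So I may assume $\D=\D^h$ has simple normal crossing support and that over a dense open $V_0\subset V$ the morphism $f$ is smooth with $\D$ relatively snc; by hypothesis the coefficients of $\D$ then lie in $[0,1]$.

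\textbf{Covering trick.} Write $\D=\sum a_i\D_i$ with $0\le a_i\le 1$, fix an integer $N$ clearing all denominators, and — using Bertini-type choices of auxiliary general members to make the branch divisor snc — build a Kummer cover $\pi\colon W'\to V'$ ramified to order $N$ along the $\D_i$ and the auxiliary divisors, followed by a log-resolution $\mu\colon \tilde W'\to W'$; set $g=f\circ\pi\circ\mu\colon \tilde W'\to V$. The Hurwitz formula gives $K_{\tilde W'/V}=(\pi\circ\mu)^*(K_{V'/V}+\D)+R$ with $R$ effective (ramification plus $\mu$-exceptional terms), arranged so that pushing forward a suitable power $m'K_{\tilde W'/V}$ and decomposing $g_*\cO_{\tilde W'}(m'K_{\tilde W'/V})$ into eigensheaves for the Galois action $\Z/N$ exhibits a twist of $f_*\cO_{V'}(m(K_{V'/V}+\D))$ as one of those eigensheaves, hence as a direct summand (in particular a quotient) of $g_*\cO_{\tilde W'}(m'K_{\tilde W'/V})$.

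\textbf{Classical weak positivity and descent.} Now apply Viehweg's weak positivity theorem: $g_*\cO_{\tilde W'}(m'K_{\tilde W'/V})$ is weakly positive. For $m'=1$ this rests on the Fujita--Kawamata semipositivity of $g_*\omega_{\tilde W'/V}$ produced by the variation of Hodge structure of the smooth part of the family; for general $m'$ it follows by Viehweg's fibre-product trick, reducing to the $m'=1$ case on the $m'$-fold fibre power. Since weak positivity passes to quotient sheaves (hence to direct summands) and is insensitive to birational modification of $\tilde W'$ and to shrinking $V$ to a dense open subset, weak positivity of the eigensheaf, and therefore of $f_*\cO_{V'}(m(K_{V'/V}+\D))$, follows for every $m$ with $m\D$ integral.

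\textbf{Main obstacle.} The genuine difficulty is that the hypothesis is only \emph{log-canonical} on the generic fibre, not klt: coefficient-one components of $\D$ and honest log-canonical fibre singularities occur. The Kummer cover clears the fractional coefficients, but the coefficient-$1$ part and the lc singularities force a limiting/boundary analysis — one must run the Hodge-theoretic semipositivity after a semistable (or at least unipotent-monodromy) reduction, so that the period map extends and the Deligne canonical extension of the Hodge bundle is nef on the base. This is exactly where Campana's orbifold formalism enters (equivalently, Saito's mixed Hodge module machinery in later treatments), and it is the technical heart of the theorem; the rest is bookkeeping with ramified covers and the formal stability properties of weak positivity.
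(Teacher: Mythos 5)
The first thing to note is that the paper contains no proof of this statement: Theorem \ref{thm:campana} is quoted, with attribution, from Campana \cite[Theorem 4.13]{ca} (as an improvement of earlier results of Kawamata \cite{kawab}, Koll\'ar \cite{higherkol} and Viehweg \cite{vie}), and is then used only through Corollary \ref{cor:campana}. So there is no internal argument to compare yours with; the benchmark is Campana's published proof, which your sketch is attempting to reconstruct along the classical Fujita--Kawamata--Viehweg line.

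As a reconstruction, the ``soft'' steps of your outline are essentially right and standard: discarding the vertical part of $\D$ (weak positivity only sees a dense open subset of $V$ and is preserved under generically surjective maps of torsion-free sheaves), passing to a Kummer cover to clear denominators, identifying a twist of $f_*\cO_{V'}(m(K_{V'/V}+\D))$ as an eigensheaf, and descending weak positivity from $g_*\cO_{\tilde W'}(m'K_{\tilde W'/V})$. But there is a genuine gap, and it is precisely the content of the theorem as opposed to Viehweg's earlier versions: the hypothesis is only \emph{log-canonical} on the generic fibre. The covering trick as you describe it clears fractional coefficients but does not handle coefficient-one components or lc (non-klt) singularities of $(V'_v,\D_{|V'_v})$: a cyclic cover branched along a coefficient-one divisor yields an eigensheaf computation in which the relevant pair fails to be klt, so the reduction to pluricanonical pushforwards of the cover does not go through as stated, and the alternative of perturbing the coefficients to $1-\epsilon$ is not innocuous either, since the sheaf $f_*\cO_{V'}(m(K_{V'/V}+\D_\epsilon))$ itself changes with $\epsilon$ (the admissible $m$ does too), so no limiting argument for weak positivity is available without extra uniform input. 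You acknowledge this yourself in the final paragraph and defer it to semistable reduction, canonical extensions of Hodge bundles, or Campana's orbifold formalism; that is an accurate diagnosis, but it means the decisive step of the proof is left as a black box, so the proposal is a plausible roadmap rather than a proof of the statement.
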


We will rather use the following consequence of Campana's positivity result (for a proof see e.g. \cite[Section \S 6.2.1]{D}).
\begin{cor}\label{cor:campana}
Let $f:V'\to V$ be a morphism with connected fibres between smooth projective varieties. Let $\D$ be an
effective 
$\Q$-divisor on $V'$ whose restriction to the generic
fibre is log-canonical. Let $W$ be a general fiber of $f$ and suppose that 
$$
 \kappa(W, m(K_{V'/V}+\D)_{|W})\geq 0.
$$
Then, for every ample divisor $H$ on $V$ there exists a positive integer $b>0$ such that
$$
 h^0 (V', b(K_{V'/V}+\D +f^*H))\not =0.
$$
\end{cor}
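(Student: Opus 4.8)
The plan is to derive this from Campana's Theorem \ref{thm:campana} by unwinding the definition of weak positivity, the hypothesis on the general fibre being precisely what keeps the relevant direct image sheaves non-zero.

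First I would replace $m$ by a suitable multiple so that $m\D$ becomes integral; this does not affect the inequality $\kappa(W,m(K_{V'/V}+\D)_{|W})\ge 0$, which then yields an integer $\ell\ge 1$ with $h^0\bigl(W,\ell m(K_{V'/V}+\D)_{|W}\bigr)\ne 0$ for $W$ a general fibre of $f$. Since $t\mapsto h^0\bigl(V'_t,\ell m(K_{V'/V}+\D)_{|V'_t}\bigr)$ is upper semi-continuous on $V$ and constant on a dense open subset, the restriction of $\cO_{V'}(\ell m(K_{V'/V}+\D))$ to the generic fibre of $f$ also carries a non-zero section; equivalently, the torsion-free sheaf $\cE:=f_*\cO_{V'}\bigl(\ell m(K_{V'/V}+\D)\bigr)$ on $V$ has positive rank. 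By Theorem \ref{thm:campana}, applied with $m$ replaced by $\ell m$ (and by any further multiple of $\ell m$), the sheaf $\cE$ is weakly positive, and so is $\cE_b:=f_*\cO_{V'}\bigl(b(K_{V'/V}+\D)\bigr)$ for every multiple $b$ of $\ell m$.

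Given an ample divisor $H$ on $V$, I would then apply the definition of weak positivity of $\cE$ to the ample invertible sheaf $\cO_V(\ell m H)$ with exponent $\alpha=1$: there exists $\beta>0$ such that $\widehat S^{\beta}(\cE)\otimes\cO_V(\beta\ell m H)$ is globally generated over a dense open subset of $V$. Set $b:=\beta\ell m$, and let $\cG\subseteq\cE_b$ be the image of the natural multiplication map $S^{\beta}(\cE)\to f_*\cO_{V'}\bigl(b(K_{V'/V}+\D)\bigr)=\cE_b$. As $S^{\beta}(\cE)$ is torsion-free it agrees with $\widehat S^{\beta}(\cE)$ over a dense open, so $S^{\beta}(\cE)\otimes\cO_V(\beta\ell m H)$ is generically globally generated, and hence so is its quotient $\cG\otimes\cO_V(\beta\ell m H)$. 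Moreover $\cG$ has positive rank: over the generic point of $V$ the multiplication map sends $s^{\otimes\beta}$ to the section $s^{\beta}$, which is non-zero whenever $0\ne s\in H^0\bigl(V'_\eta,\cO_{V'_\eta}(\ell m(K_{V'/V}+\D))\bigr)$, since $V'_\eta$ is integral and powers of a non-zero section of a line bundle on an integral variety are non-zero. A coherent sheaf of positive rank that is globally generated over a dense open subset has a non-zero global section, so $h^0\bigl(V,\cG\otimes\cO_V(\beta\ell m H)\bigr)\ne 0$, and a fortiori $h^0\bigl(V,\cE_b\otimes\cO_V(\beta\ell m H)\bigr)\ne 0$.

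To conclude, the projection formula gives $\cE_b\otimes\cO_V(\beta\ell m H)=f_*\bigl(\cO_{V'}(b(K_{V'/V}+\D))\otimes f^*\cO_V(\beta\ell m H)\bigr)=f_*\cO_{V'}\bigl(b(K_{V'/V}+\D+f^*H)\bigr)$, whence $h^0\bigl(V',b(K_{V'/V}+\D+f^*H)\bigr)=h^0\bigl(V,\cE_b\otimes\cO_V(\beta\ell m H)\bigr)\ne 0$, as required. The one point that needs care is that the multiplication map $S^{\beta}(\cE)\to\cE_b$ is in general not injective, so the argument cannot be run directly on symmetric powers; passing to its image $\cG$ and checking that $\cG$ still has positive rank is what makes the argument go through. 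Twisting by $\cO_V(\ell m H)$ rather than by $\cO_V(H)$ in the weak positivity step is only a bookkeeping device, ensuring that the class finally produced on $V'$ is an honest multiple of $K_{V'/V}+\D+f^*H$.
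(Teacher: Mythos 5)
Your overall strategy is the right one, and it is the same circle of ideas as the reference the paper points to for this statement (\cite[\S 6.2.1]{D}, the paper gives no proof of its own): positive rank of $\cE=f_*\cO_{V'}(\ell m(K_{V'/V}+\D))$ from the hypothesis on the general fibre, Campana's weak positivity, the multiplication map into $\cE_b$, and the projection formula. The point you single out as delicate (non-injectivity of $S^{\beta}(\cE)\to\cE_b$, fixed by passing to the image $\cG$ and checking its rank via powers of a section on the integral generic fibre) is indeed handled correctly. The genuine gap is elsewhere, namely in the sentence ``As $S^{\beta}(\cE)$ is torsion-free it agrees with $\widehat{S}^{\beta}(\cE)$ over a dense open, so $S^{\beta}(\cE)\otimes\cO_V(\beta\ell m H)$ is generically globally generated, and hence so is its quotient $\cG\otimes\cO_V(\beta\ell m H)$.'' Weak positivity in Viehweg's sense (the definition used for Theorem \ref{thm:campana}) provides generic global generation of $\widehat{S}^{\beta}(\cE)\otimes\cO_V(\beta\ell m H)$ by \emph{its own} global sections, and these are not global sections of $S^{\beta}(\cE)\otimes\cO_V(\beta\ell m H)$: the natural map goes $S^{\beta}(\cE)\to\widehat{S}^{\beta}(\cE)$, so a section of the hull only restricts to a section of $S^{\beta}(\cE)$ over the big open set $U_1\subset V$ (complement of codimension $\ge 2$) where $\cE$ is locally free. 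Consequently your construction produces a non-zero section of $\cG\otimes\cO_V(bH)$, equivalently of $\cO_{V'}(b(K_{V'/V}+\D+f^*H))$, only over $f^{-1}(U_1)$.

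This is not a removable formality: $f^{-1}(V\setminus U_1)$ may contain divisors of $V'$ (divisors mapping into a codimension $\ge 2$ subset of $V$, which do occur for arbitrary fibrations between smooth varieties), and a section defined off such a divisor can have a pole along it; equivalently, $f_*$ of a line bundle is torsion-free but in general not normal, so sections of $\cE_b\otimes\cO_V(bH)$ over $U_1$ need not extend to $V$, while the final identification $h^0(V,\cE_b\otimes\cO_V(bH))=h^0(V',b(K_{V'/V}+\D+f^*H))$ requires honest global sections. As written, your argument only yields $h^0\bigl(V',b(K_{V'/V}+\D+f^*H)+E'\bigr)\neq 0$ for some effective divisor $E'$ supported on such $f$-degenerate divisors, which is weaker than the statement. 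To close the gap one needs an extra input --- for instance a lemma showing that effective $f$-degenerate divisors can be absorbed (they do not contribute to the relevant direct images of multiples of $K_{V'/V}+\D$), or a reduction to an equidimensional/flat model where no such divisors exist --- and this is precisely the kind of care taken in the cited source.
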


%
\section{Proof of the results}\label{S:main}
%

%
\subsection{The proof of the main result}\label{SS:main}
%

We will use the following standard result. 

\begin{lem}\label{lem:kodaira}
Let $(X,\D)$ be a pair, $T$ a smooth quasi-projective variety, $f:X\to T$ a surjective morphism, and $X_t$ the general fiber of $f$.  Suppose that $\D$ is big and that $(X_t,\D_{|X_t})$ is klt. Then there exist two $\Q$-divisors $A$ and $B$, with $A$ ample and $B$ effective such that $\D\sim_\Q A+B$, and the restrictions $(X_t,(A+B)_{|X_t})$ and $(X_t,B_{|X_b})$ are klt.
\end{lem}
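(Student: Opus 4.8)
The plan is to produce the decomposition from Kodaira's lemma, built around a free perturbation parameter, and then to use the openness of the klt property under small perturbations of the boundary to fix that parameter. Since $\D$ is big, Kodaira's lemma (see e.g.\ \cite[\S 2.2]{l1}) gives an ample $\Q$-divisor $A_0$ and an effective $\Q$-divisor $E$ with $\D\sim_\Q A_0+E$; after replacing $A_0$ by $\tfrac1m A''$ for $m\gg0$ with $mA_0$ very ample and $A''\in|mA_0|$ a general member, we may assume $A_0=\tfrac1m A''$ is effective, with $A''$ a general member of a very ample linear system on $X$. The elementary point is that for every rational number $0<\eps<1$,
$$\D=\eps\D+(1-\eps)\D\sim_\Q\eps A_0+\bigl((1-\eps)\D+\eps E\bigr),$$
while $\eps A_0+(1-\eps)\D+\eps E=\eps(A_0+E)+(1-\eps)\D\sim_\Q\D$. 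Hence, setting $A:=\eps A_0$ (ample and effective) and $B:=(1-\eps)\D+\eps E$ (effective), one gets $\D\sim_\Q A+B$ for every such $\eps$, with $B=\D+\eps(E-\D)$ and $A+B=\D+\eps(A_0+E-\D)$, where $E-\D\sim_\Q-A_0$ and $A_0+E-\D\sim_\Q0$ are $\Q$-Cartier (so the perturbed pairs and their discrepancies make sense). It then remains to choose $\eps$ so that the two restricted pairs are klt.

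For this I would fix a general fibre $X_t$, shrinking $T$ so that $X_t$ is normal and all the restrictions are defined, together with a log-resolution $\mu:Y\to X_t$ of $(X_t,(\D+E+A'')_{|X_t})$, with $A''$ chosen general enough that $\mu^*(A''_{|X_t})$ is its strict transform and meets the exceptional locus and the strict transforms of $\D$ and $E$ transversally. On such a $Y$ the discrepancy of any prime divisor with respect to $(X_t,B_{|X_t})$, respectively $(X_t,(A+B)_{|X_t})$, is an affine function of $\eps$ whose value at $\eps=0$ equals its discrepancy with respect to $(X_t,\D_{|X_t})$, hence is $>-1$ by hypothesis; likewise every component of $B_{|X_t}$, resp.\ of $(A+B)_{|X_t}$, appears with a coefficient that is affine in $\eps$ and at $\eps=0$ equals either a coefficient of $\D_{|X_t}$ (so $<1$) or $0$. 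As only finitely many divisors occur on the fixed resolution, all of these finitely many strict inequalities survive for $\eps$ in a nonempty interval $(0,\eps_0)$, and any such $\eps$ then does the job. That a single $\eps_0$ works for the general fibre, and not just for one fibre, follows by spreading out: pick $A''$ general on $X$, take a log-resolution of $(X,\D+E+A'')$, and shrink $T$ so that it restricts fibrewise to a log-resolution.

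\textbf{Expected main difficulty.} Conceptually there is little beyond Kodaira's lemma; the substance is the uniform openness of klt after restriction --- that on one fixed log-resolution only finitely many discrepancies and boundary coefficients must be controlled, and that each of them varies affinely in the parameter $\eps$ and lies strictly on the klt side at $\eps=0$. The two points needing some care are ensuring the ample summand $A$ creates no new log-canonical centre on the general fibre (hence the choice of $A$ from a general member of a very ample system, with $\eps\to0$), and checking that the resolution and genericity choices are compatible with restriction to a general fibre.
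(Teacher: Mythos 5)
Your argument is correct, and it is exactly the standard one the authors have in mind: the paper states this lemma without proof as a ``standard result'', and your route (Kodaira's lemma to write $\D\sim_\Q A_0+E$, the convex perturbation $\D\sim_\Q \eps A_0+((1-\eps)\D+\eps E)$, and openness of the klt condition via finitely many affine-in-$\eps$ discrepancies on one fixed log-resolution that restricts to the general fibre) is the expected proof. Your attention to the two delicate points --- taking the ample part from a general member of a very ample system so the pairs are effective and no new non-klt locus appears on the fibre, and spreading out the log-resolution over an open subset of $T$ so a single $\eps_0$ works for the general fibre --- is precisely what makes the argument complete.
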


Another easy fact which we will use is the following.
\begin{lem}\label{lem:algebre}
 Let $\mathfrak a$, $\mathfrak b$ two ideal sheafs of $X$. Let $f : Y \rightarrow X$ a proper birational morphism such that $f^{-1}{\mathfrak a}$ and $f^{-1}{\mathfrak b}$ are invertible. Let $E$ and $F$ two effective divisors on $Y$ such that ${\mathcal O}_Y (-F) = f^{-1}{\mathfrak a}$ and ${\mathcal O}_Y (-E) = f^{-1}{\mathfrak b}$. Then ${\mathcal O}_Y (-E-F) = f^{-1}({\mathfrak a}\otimes {\mathfrak b})$.
\end{lem}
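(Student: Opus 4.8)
The plan is to reduce the identity to a computation with local generators, using that the inverse-image ideal-sheaf operation $\mathfrak{c}\mapsto f^{-1}\mathfrak{c}=\mathfrak{c}\cdot\cO_Y$ is local on $X$, and reading the notation $\mathfrak{a}\otimes\mathfrak{b}$ as the product ideal $\mathfrak{a}\mathfrak{b}$, i.e.\ the image of the multiplication map $\mathfrak{a}\otimes_{\cO_X}\mathfrak{b}\to\cO_X$. Accordingly I would cover $X$ by affine opens $U$ on which $\mathfrak{a}=(a_1,\dots,a_n)$ and $\mathfrak{b}=(b_1,\dots,b_m)$ are generated by finitely many sections, so that $\mathfrak{a}\mathfrak{b}$ is generated over $U$ by the products $a_ib_j$.

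\emph{Step 1: inverse image commutes with products.} By definition $f^{-1}(\mathfrak{a}\mathfrak{b})$ is generated over $f^{-1}(U)$ by the pull-backs $f^\sharp(a_i)f^\sharp(b_j)$, and this ideal is precisely the product of the ideal generated by the $f^\sharp(a_i)$ with the ideal generated by the $f^\sharp(b_j)$; hence $f^{-1}(\mathfrak{a}\mathfrak{b})=(f^{-1}\mathfrak{a})\cdot(f^{-1}\mathfrak{b})$ as ideal sheaves of $Y$. No flatness or smoothness hypothesis is needed here, only that $f^{-1}\mathfrak{c}$ is by definition generated by the pull-backs of local generators of $\mathfrak{c}$.

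\emph{Step 2: the product of two effective Cartier ideals.} For effective Cartier divisors $E$ and $F$ on $Y$ one has $\cO_Y(-E)\cdot\cO_Y(-F)=\cO_Y(-E-F)$ inside $\cO_Y$: this is again local, since on a small open $\cO_Y(-E)=(s_E)$ and $\cO_Y(-F)=(s_F)$ are principal, generated by local equations of $E$ and of $F$, so their product ideal is generated by $s_Es_F$, which is a local equation of the effective divisor $E+F$.

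\emph{Conclusion.} Combining the two steps with the hypotheses $\cO_Y(-F)=f^{-1}\mathfrak{a}$ and $\cO_Y(-E)=f^{-1}\mathfrak{b}$ gives $f^{-1}(\mathfrak{a}\otimes\mathfrak{b})=(f^{-1}\mathfrak{a})\cdot(f^{-1}\mathfrak{b})=\cO_Y(-F)\cdot\cO_Y(-E)=\cO_Y(-E-F)$, as claimed. None of the steps is a genuine obstacle; the only point requiring care is the bookkeeping of definitions — reading $f^{-1}\mathfrak{a}$ as the inverse-image ideal $\mathfrak{a}\cdot\cO_Y$ rather than the sheaf-theoretic inverse image, and $\mathfrak{a}\otimes\mathfrak{b}$ as the product ideal, so that all three expressions in the last display are bona fide ideal sheaves.
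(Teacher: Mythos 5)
Your argument is correct, and it is exactly the kind of routine local verification the paper has in mind: the authors state this lemma as an ``easy fact'' and give no proof at all, so there is no proof in the paper to compare against. Your two steps --- that the inverse-image ideal sheaf of a product of ideals is the product of the inverse-image ideals (checked on affine opens via generators), and that $\cO_Y(-E)\cdot\cO_Y(-F)=\cO_Y(-E-F)$ for effective Cartier divisors --- do fill the gap completely, and your reading of $\mathfrak a\otimes\mathfrak b$ as the product ideal is the intended one; note that in the paper's actual application one of the two factors is the invertible sheaf $\cO_Y(-E)$, so the natural map from the tensor product onto the product ideal is an isomorphism there and the notational ambiguity is harmless.
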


An important technical ingredient of the proof is provided by the following.
\begin{prop}\label{prop:star}
Let $(X,\D)$ pair and $L$ a big $\Q$-Cartier divisor on $X$. Then there exists an effective $\Q$-Cartier divisor $D\sim_\Q L$ such that $\Nklt (X,\D+D)\subset \nn (L) \cup \Nklt (X,\D)$.
\end{prop}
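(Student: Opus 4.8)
The idea is to take for $D$, up to pushing forward from a log resolution of the pair, roughly $\tfrac1m$ times a general member of $|mL|$ for $m$ large and divisible, and to keep the singularities it creates away from $\nn(L)$ under control by means of the asymptotic multiplier ideal of $L$. What makes this work, and what one has to exploit, is that $\Cosupp(\cJ(X,\|mL\|))\subseteq\nn(L)$ for every $m$, and that the non‑asymptotic ideals $\cJ(Y,\tfrac1m\cdot\bg(|m\pi^*L|))$ stabilise, for the \emph{big} divisor $L$, to the asymptotic ideal $\cJ(Y,\|\pi^*L\|)$ (a form of \cite[Theorem 11.2.21]{l2}), so that the non‑klt locus produced by a general section of $|m\pi^*L|$ is, asymptotically in $m$, no larger than $\Cosupp(\cJ(Y,\|m\pi^*L\|))$. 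A naive general member of $|mL|$ on $X$ itself is not enough, because $\Bs(|mL|)$, and even $\sbs(L)$, can be strictly larger than $\nn(L)$; the multiplier ideal is precisely the device that discards the part of the base locus which disappears asymptotically.

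First I would fix a log resolution $\pi:Y\to X$ of $(X,\D)$ which also resolves $\bg(|m\pi^*L|)$ for the (large, divisible) $m$ to be chosen, writing $|m\pi^*L|=|M|+P$ with $|M|$ base‑point free and $P$ the simple normal crossing fixed part, and $K_Y+\G=\pi^*(K_X+\D)+E$ with $\G,E\ge0$ effective without common component, $\pi_*\G=\D$, $E$ $\pi$‑exceptional and $\G$ of simple normal crossing support. By the very definition of $\Nklt(X,\D)$, every component of $\G$ meeting $\pi^{-1}(x)$ has coefficient $<1$ as soon as $x\notin\Nklt(X,\D)$, and since $(Y,\G)$ is log smooth this yields a uniform lower bound $\e_0:=1-\max\{c<1:\ c\text{ a coefficient of }\G\}>0$ for the log‑discrepancies $a_v(Y,\G)$, hence, using $K_Y+\G-E=\pi^*(K_X+\D)$ with $E\ge0$, for $a_v(X,\D)=a_v(Y,\G)+\ord_v(E)\ge a_v(Y,\G)\ge\e_0$, over all divisorial valuations $v$ whose centre on $X$ is not contained in $\Nklt(X,\D)$.

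Next let $M'\in|M|$ be general and set $D:=\tfrac1m\pi_*(M'+P)$, an effective $\Q$‑divisor with $D\sim_\Q L$ (note $M'+P\sim m\pi^*L$, so $\pi_*(M'+P)\sim mL$). To prove $\Nklt(X,\D+D)\subseteq\nn(L)\cup\Nklt(X,\D)$ it suffices to show that $(Y,\G+\tfrac1m(M'+P))$ is klt at every $y\in\pi^{-1}(x)$ whenever $x\notin\nn(L)\cup\Nklt(X,\D)$. Since $|M|$ is free, a general $M'$ is smooth and transverse to $\G+P+E$ (Bertini), so near such a $y$ all divisors involved are simple normal crossing and klt‑ness just means that all coefficients are $<1$. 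Those of $\G$ are $<1$ by the previous step; the coefficient $\tfrac1m$ of $M'$ is $<1$; and for a component $P_0$ of $P$ through $y$ one distinguishes two cases. If the asymptotic order of vanishing $\sigma_{P_0}(\pi^*L)$ (Nakayama's $\sigma$) is positive, then $\pi(P_0)\subseteq c_X(\ord_{P_0})\subseteq\nn(L)$, so $y\notin P_0$ and this case does not occur. If $\sigma_{P_0}(\pi^*L)=0$, then, using that only finitely many prime divisors have positive $\sigma$ and that $\cJ(Y,\tfrac1m\cdot\bg(|m\pi^*L|))$ stabilises for $m$ large divisible (bigness of $L$, via \cite[Theorem 11.2.21]{l2}) to $\cJ(Y,\|\pi^*L\|)$, whose cosupport is contained in $\nn(\pi^*L)\subseteq\pi^{-1}(\nn(L))$ and hence avoids $y$, the multiplicity $\tfrac1m\ord_{P_0}(P)$ can be made $<\e_0$; combined with $\mathrm{mult}_{P_0}(\G)\le1-\e_0$ this gives a total coefficient $<1$ along $P_0$. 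Hence $(Y,\G+\tfrac1m(M'+P))$ is klt at $y$, so $(X,\D+D)$ is klt at $x$, which is the assertion.

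The main obstacle, and the point where \cite[Theorem 11.2.21]{l2} and the bigness of $L$ are genuinely needed, is exactly this last estimate: the fixed part $P$ of $|m\pi^*L|$ has to be controlled, uniformly in its (finitely many, but $m$‑dependent) components lying over $X\setminus\nn(L)$, so that $\tfrac1m\ord_{P_0}(P)$ drops below the fixed gap $\e_0$. The obvious bound in terms of $\sbs(\pi^*L)$, or of the naive fixed part of $|mL|$ on $X$, is too weak, since that locus may strictly contain $\nn(L)$; it is the asymptotic multiplier ideal, together with its stabilisation for big divisors, that confines the genuinely bad locus to $\nn(L)$. A secondary, essentially bookkeeping, difficulty is to organise the discrepancy computations on a common log resolution of $(X,\D)$ and of the systems $|m\pi^*L|$, and to deal, via Bertini, with the interaction between the general section $M'$ and the boundary $\G$.
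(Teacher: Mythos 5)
Your overall strategy is the paper's: compare the base ideal of $|m\pi^*L|$ with the asymptotic multiplier ideal, whose cosupport sits inside the non-nef locus, and let the factor $\tfrac1m$ kill what remains. But the decisive quantitative step is not justified as you have written it. You need, for every component $P_0$ of the fixed part $P=P_m$ passing through a point $y$ lying over $x\notin\nn(L)\cup\Nklt(X,\D)$, the bound $\tfrac1m\ord_{P_0}(P)<\e_0$, and you offer two reasons: (a) the ``stabilisation'' $\cJ(\tfrac1m\cdot\bg(|m\pi^*L|))=\cJ(\|\pi^*L\|)$ for $m$ large and divisible, together with the fact that this ideal is trivial at $y$; and (b) that only finitely many prime divisors have $\sigma>0$. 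Neither delivers the bound. For (a): triviality at $y$ of $\cJ(\tfrac1m\bg(|m\pi^*L|))=\cJ(\tfrac1m P)$ (with $P$ in simple normal crossings) only says that every coefficient of $\tfrac1m P$ at $y$ is $<1$; it gives nothing like $<\e_0$, and ``$<1$'' is useless exactly where it matters, namely for components of $P$ that also occur in $\G$ (or pass through strata of $\G$) with coefficient as large as $1-\e_0$. Note also that this stabilisation is just the definition of the asymptotic multiplier ideal and holds with no bigness hypothesis; it is not the content of \cite[Theorem 11.2.21]{l2}. For (b): the finiteness of $\{\sigma>0\}$ does not rescue the estimate, because your model $Y=Y_m$ and the components of $P_m$ --- in particular $\pi$-exceptional ones lying over non-divisorial pieces of $\Bs(|mL|)$ outside $\nn(L)$, which do occur since $\sbs(L)$ may strictly contain $\nn(L)$ --- vary with $m$; for each such fixed $P_0$ you only know $\tfrac1k\ord_{P_0}(\bg(|k\pi^*L|))\to\sigma_{P_0}(\pi^*L)=0$ as $k\to\infty$, which says nothing about the value at the particular $m$ already chosen, and you cannot first fix the divisors and then choose $m$, since they depend on $m$.

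The repair is precisely the uniform statement of \cite[Theorem 11.2.21]{l2} that the paper actually uses: since $L$ is big there is a \emph{fixed} effective divisor $E$ with $\cJ(\|m\pi^*L\|)\otimes\cO(-E)\subset\bg(|m\pi^*L|)$ for all sufficiently divisible $m$. Then for any component $P_0$ of $P_m$ whose image avoids $\Cosupp\cJ(\|m\pi^*L\|)$ --- a set contained in $\nn(\pi^*L)$, which maps into $\nn(L)$ (an inclusion you assert but should prove; the paper does it by the ample-perturbation argument $f_Y(\sbs(f_Y^*(L+H)))\subset\sbs(L+H)$) --- one gets $\ord_{P_0}(P_m)\le\ord_{P_0}$ of the (pulled-back) fixed divisor $E$, a bound independent of $m$, hence $\tfrac1m\ord_{P_0}(P_m)\le C/m<\e_0$ for $m\gg0$. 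This uniform comparison is exactly the role played in the paper's proof by the divisors $G_1$, $G_2$ and the inequality $G_1+f_V^*E\ge G_2$. Your remaining steps (the uniform log-discrepancy gap $\e_0$ coming from the log smooth model of $(X,\D)$, Bertini for the free part, $D\sim_\Q L$ via push-forward) are fine, and once the estimate above is put in, your argument closes and coincides in substance with the paper's.
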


\begin{proof}

We will construct $D$ as the push-forward of a divisor on a well chosen birational model of $(X,\D)$.

Let $f_Y : Y \rightarrow X$ be a log-resolution of the pair $(X,\D)$. Let $\D_Y$ be an effective $\bf Q$-divisor on $Y$ such that $(f_Y)_*\D_Y = \D$ and
$$ K_Y + \D_Y = f_Y^*(K_X + \D) + F_Y$$
with $F_Y$ effective and not having common components with $\D_Y$. Note that 
\begin{equation}\label{eq:etoile}
f_Y (\Nklt (Y,\D_Y)) \subset \Nklt (X,\D).
\end{equation}
By \cite[Theorem 11.2.21]{l2}, there is an effective divisor $E$ on $Y$ such that for each sufficiently divisible integer $m>0$ we have ${\cJ}(m \cdot\| f_Y^*L \|) \otimes {\cO}_Y(-E) \subset {\frak b}(|m f_Y^*L|)$.
Let $m>0$ be a large and sufficiently divisible integer such that 
\begin{equation}\label{eq:eq1}
\Nklt (Y,\D_Y + \frac{1}{m}E) = \Nklt (Y,\D_Y).
\end{equation} 
Let $f_V : V \rightarrow Y$ be a common log-resolution of ${\cJ}(m \cdot \| f_Y^*L \|) \otimes {\cO}_Y(-E)$ and ${\frak b}(|m f_Y^*L|)$ (see \cite[Definition 9.1.12]{l2}). 
Let $\D_V$ be an effective ${\bf Q}$-divisor such that $(f_V)_*\D_V = \D_Y + \frac{1}{m}E$ and 
$$ K_V + \D_V = f_V^*(K_Y + \D_Y + \frac{1}{m}E) + F_V $$
with $F_V$ effective not having common components with $\D_V$. Again, the pair $(V,\D_V)$ verifies
\begin{equation}\label{eq:1}
f_V (\Nklt (V,\D_V)) \subset \Nklt (Y,\D_Y+ \frac{1}{m}E) = \Nklt (Y,\D_Y).
\end{equation}
Denote by $G_1$ the effective divisor such that
$${\cO}_V(-G_1-f_V^*E) = f_V^{-1} ({\cJ}(m \cdot \| f^*_YL \|)\otimes {\cO}_Y(-E)),$$
and by $G_2$ the divisor such that 
$$
 {\cO}_V(-G_2) = f_V^{-1} {\frak b}(|m f^*_YL|).
$$
Since
${\cJ}(m \cdot \| f^*_YL \|) \otimes {\cO}_X(-E) \subset {\frak b}(|m f^*_Y L|)$, we have $G_1 + f^*_V E \geqslant G_2$. Moreover, by Lemma \ref{lem:algebre}, the divisor $G_1$ verifies 
\begin{equation}\label{eq:G_1}
f_V(G_1) = \Cosupp ({\cJ}(m \cdot \|f^*_Y L \|)).
\end{equation} 
Notice that by construction there exists a base-point-free linear series $|N|$ on $V$ such that $f_V^* f_Y^* mL \sim N + G_2$. Since $|N|$ is base-point-free, there is a ${\bf Q}$-divisor $N'\sim_{{\bf Q}} N$ such that  we have 
\begin{equation}\label{eq:2}
Z:=\Nklt (V,\D_V +  \frac{1}{m}(G_1 + N')) =  \Nklt (V,\D_V + \frac{1}{m}G_1)\subset \Supp G_1 \cup \Nklt (V,\D_V).
\end{equation}
By (\ref{eq:1}), (\ref{eq:G_1})  and (\ref{eq:2})
we get
$$
 f_V(Z) \subset  \Cosupp ({\cJ}(m \cdot \| f^*_YL \|)) \cup \Nklt (Y,\D_Y) \subset \nn (f^*_YL) \cup \Nklt (Y,\D_Y).
$$
Since $G_1 + f_V^*E + N' \geqslant G_2 + N'$, we have
$(f_V)_*(G_1 + f_V^*E + N' ) \geqslant  (f_V)_*(G_2 + N' )$, thus
$$ \Nklt(Y,\D_Y + \frac{1}{m}((f_V)_*(N' + G_2))) \subset \Nklt(Y,\D_Y + \frac{1}{m}(E + (f_V)_*(N' + G_1))).$$
Since  we have
$$\Nklt(Y,\D_Y + \frac{1}{m}(E + (f_V)_*(N' + G_1))) = f_V (\Nklt(V,\D_V + \frac{1}{m}(N' + G_1))),$$
we deduce that 
$$\Nklt(Y,\D_Y + \frac{1}{m}( (f_V)_*(N' + G_2))) \subset \nn (f^*_YL) \cup \Nklt (Y,\D_Y).
$$
Note that $(f_V)_*(N' + G_2)) \sim_\Q mf^*_YL$.
Thanks to (\ref{eq:etoile}) to conclude the proof it is sufficient to   
 prove that  $f_Y(\nn (f^*_YL)) \subset \nn (L)$ and set $D:=(f_Y)_*( \frac{1}{m}( (f_V)_*(N' + G_2)))$.
Let $A$ an ample ${\bf Q}$-divisor on $Y$ and $W$ an irreducible component of $\sbs(f^*_YL + A)$. Then there exists an ample ${\bf Q}$-divisor   $H$ on $X$ such that  $A - f^*_YH$ is ample. Thus 
$$f_Y(W) \subset f_Y(\sbs(f^*_Y(L + H))) \subset \sbs(L + H).$$
This is true for every $A$ and therefore
$$
f_Y(\nn (f^*_YL)) \subset \nn (L).
$$
From the previous inclusions, 
we conclude that 
$$\Nklt (X, \D + \frac{1}{m}(f_Y)_*((f_V)_*(N' + G_2))) \subset \nn (L) \cup \Nklt (X,\D)$$
and setting $D:=(f_Y)_*( \frac{1}{m}( (f_V)_*(N' + G_2)))$ we are done.
\end{proof}

\begin{proof}[Proof of Theorem \ref{thm:gen}]
We give the proof in the case $-(K_X+\D)$ is pseff and point out at the end how to obtain a stronger conclusion when it is big.

We may assume that $Z$ is smooth. 
Fix an ample divisor $H_X$ on $X$, and a rational number $0<\d\ll 1$. From  Proposition \ref{prop:star} 
applied to the big divisor $L=-(K_X+\D)+\d H_X$ we deduce the existence of an effective $\bf Q$-divisor $D\sim_\Q -(K_X+\D)+\d H_X$ such that $\Nklt (X,\D+D)$ does not dominate the base $Y$. From a result on generic restrictions of multiplier ideals \cite[Theorem 9.5.35]{l2}, we have that the restriction to the generic fiber $(X_z,(\D+D)_{|X_z})$ is a klt pair. In conclusion, by the above discussion and 
Lemma \ref{lem:kodaira}, we may replace our original pair $(X,\D)$ by  a new pair, which, by abuse of notation, we will again call $(X,\D)$, such that:
\begin{enumerate}\label{enu:red}
\item[(i)] $K_X+\D\sim_\Q \d H_X$;   
\item[(ii)] $\D=A+B$, where $A$ and $B$ are $\Q$-divisor which are respectively ample and effective, and $A$ may be taken such that $A\sim_\Q \frac{\d}{2}H_X$;
\item[(iii)] the restrictions $(X_z, \D_{|X_z})$ and $(X_z, B_{|X_z})$ to the  fiber  over the general point $z\in Z$ are klt.
\end{enumerate}
Consider a proper birational morphism 
$\mu: Y\to X$ from a smooth projective variety $Y$onto $X$ providing a log-resolution of the pair 
$(X,\D)$ and resolving the indeterminacies of the map $X\dra Z$. In particular we have the following commutative diagram
\begin{equation}
\xymatrix{ Y\ar[r]^{\mu}\ar_{g}[dr]&X\ar@{-->}^{f}[d]\\
 & Z.}
\end{equation}
We write
\begin{equation}\label{eq:can}
 K_Y+\E=\mu^*(K_X+\D)+E,
\end{equation}
where $\E$ and $E$ are effective divisor without common components and $E$ is $\mu$-exceptional.
Notice that from (\ref{eq:can}) and the fact that $E$ is exceptional we deduce that
\begin{equation}\label{eq:koddim}
\kappa(Y,K_Y+\E)=\kappa(X,K_X+\D)\ \  
\end{equation}
whereas from (\ref{eq:can}) together with (iii) above we get that
\begin{equation}\label{eq:restr}
{\textrm{the restriction $(Y_z, \E_{|Y_z})$  to the fiber of $g$  over the general point $z\in Z$ is klt.}}
\end{equation}
More precisely, if we write 
\begin{equation}\label{eq:E}
\E=\mu^*A + B_Y\sim_\Q \frac{\d}{2} \mu^*H_X + B_Y
\end{equation}
we have 
\begin{equation}\label{eq:restr'}
{\textrm{the restriction $(Y_z, (B_Y)_{|Y_z})$  to the fiber of $g$  over the general point $z\in Z$ is klt.}}
\end{equation}
We have the following result.
\begin{lem}\label{lem:elem}
In the above setting, there exists an ample $\Q$-divisor $H_Z$ on $Z$ such that $\mu^*H_X\sim_\Q2g^*H_Z+\G$ where $\G$ is effective and the restriction $(Y_z,(B_Y+\G)_{Y_z})$ to the general fiber is klt.
\end{lem}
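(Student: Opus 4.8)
The plan is to take $\G$ to be a rescaling of a sufficiently general member of $|pM|$ for the big $\Q$-divisor $M:=\mu^*H_X-2g^*H_Z$, the real work being to control the singularities of such a $\G$ along the general fibre of $g$. So first I would fix $M$: since $H_X$ is ample and $\mu$ is birational, $\mu^*H_X$ is big on $Y$, and being the pull-back of a base-point-free multiple of $H_X$ it is semiample; bigness being an open condition and $g^*$ of an ample class being arbitrarily small in $N^1(Y)$, I would choose an ample $\Q$-divisor $H_Z$ on $Z$ small enough that $M=\mu^*H_X-2g^*H_Z$ is still big. It then suffices to produce an effective $\Q$-divisor $\G\sim_\Q M$ with $(Y_z,(B_Y+\G)_{|Y_z})$ klt for general $z\in Z$, since then automatically $\mu^*H_X\sim_\Q 2g^*H_Z+\G$.

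The key point is that, although $M$ is only big, its restriction to the general fibre of $g$ is semiample: one has $M_{|Y_z}\sim_\Q (\mu^*H_X)_{|Y_z}$, because $g^*H_Z$ restricts to zero on the general fibre, and $(\mu^*H_X)_{|Y_z}$ is semiample. I would exploit this through asymptotic multiplier ideals. Consider $\cJ(Y,B_Y+\|M\|)$. By the generic restriction theorem for multiplier ideals \cite[Theorem 9.5.35]{l2} (applied level by level), for general $z$ one has $\cJ(Y,B_Y+\|M\|)\cdot\cO_{Y_z}=\cJ(Y_z,(B_Y)_{|Y_z}+\|M_{|Y_z}\|)$. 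Since $M_{|Y_z}$ is semiample, $\bg(|kM_{|Y_z}|)=\cO_{Y_z}$ for $k$ sufficiently divisible, so the asymptotic part contributes nothing and the right hand side equals $\cJ(Y_z,(B_Y)_{|Y_z})=\cO_{Y_z}$, the last equality because $(Y_z,(B_Y)_{|Y_z})$ is klt by (\ref{eq:restr'}). Hence $\cJ(Y,B_Y+\|M\|)$ is the trivial ideal in a neighbourhood of the general fibre of $g$.

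It remains to realize this triviality by an honest divisor. Picking $p$ sufficiently divisible so that $pM$ is Cartier and $\cJ(Y,B_Y+\frac1p\bg(|pM|))=\cJ(Y,B_Y+\|M\|)$, I would take a general member $\G_p\in|pM|$ and set $\G:=\frac1p\G_p$; then $\G\ge 0$, $\G\sim_\Q M$, and $\cJ(Y,B_Y+\G)\supseteq\cJ(Y,B_Y+\frac1p\bg(|pM|))=\cJ(Y,B_Y+\|M\|)$, which is trivial near the general fibre, so $(Y,B_Y+\G)$ is klt there. A general fibre is not contained in $\Bs|pM|$ (otherwise $\Bs|pM|$ would be all of $Y$, against the bigness of $M$), so $\G_{|Y_z}$ is a well-defined effective $\Q$-divisor, and one further application of generic restriction gives $\cJ(Y_z,(B_Y+\G)_{|Y_z})=\cJ(Y,B_Y+\G)\cdot\cO_{Y_z}=\cO_{Y_z}$, i.e. $(Y_z,(B_Y+\G)_{|Y_z})$ is klt; together with $\mu^*H_X\sim_\Q 2g^*H_Z+\G$ this is the assertion. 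The hard part is precisely this passage: for a merely big class an effective representative may be badly singular along the general fibre of a fibration --- this is the phenomenon of Example \ref{ex:zhang} --- and what rescues us is the semiampleness of $M_{|Y_z}$, which forces the relevant asymptotic multiplier ideal to be trivial along $Y_z$; the points needing care are the validity of generic restriction in the asymptotic setting and the genericity of $\G_p$ ensuring it computes $\cJ(Y,B_Y+\frac1p\bg(|pM|))$.
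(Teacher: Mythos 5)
There is a genuine gap at the central step of your argument. The equality you invoke, $\cJ(Y,B_Y+\|M\|)\cdot\cO_{Y_z}=\cJ\bigl(Y_z,(B_Y)_{|Y_z}+\|M_{|Y_z}\|\bigr)$, is not what the generic restriction theorem gives, and it is false in general. Applying \cite[Theorem 9.5.35]{l2} level by level only yields the inclusion $\cJ(Y,B_Y+\|M\|)\cdot\cO_{Y_z}\subseteq\cJ\bigl(Y_z,(B_Y)_{|Y_z}+\|M_{|Y_z}\|\bigr)$, because sections restrict, i.e. $\bg(|kM|)\cdot\cO_{Y_z}\subseteq\bg(|kM_{|Y_z}|)$ --- and this is the useless direction: triviality of the fibrewise asymptotic ideal says nothing about the global one. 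Semiampleness of $M_{|Y_z}$ is a purely fibrewise condition and does not control the global linear systems $|kM|$, which may all carry a horizontal fixed component. This is exactly the phenomenon of Example \ref{ex:zhang}: there the restriction of $-K_S$ to every fibre is ample, yet every $|{-mK_S}|$ has a fixed component dominating the base; rescaling that example (take $M=3(-K_S)$ with $\deg A$ slightly larger than $2\deg K_C$, so the asymptotic multiplicity of the horizontal fixed component exceeds $1$) gives a big divisor with semiample restriction to the general fibre for which $\cJ(\|M\|)$ is nontrivial along a divisor dominating the base, so your claimed equality fails. Consequently the triviality of $\cJ(Y,B_Y+\|M\|)$ near the general fibre, and hence the kltness of $(Y_z,(B_Y+\G)_{|Y_z})$ for a general member $\G=\frac1p\G_p$, is not established by your argument. (The subsequent step asserting $\cJ(Y,B_Y+\G)\supseteq\cJ(Y,B_Y+\frac1p\bg(|pM|))$ is also backwards as a formal monotonicity statement, since $\cO_Y(-\G_p)\subseteq\bg(|pM|)$; it is saved for a general member by Koll\'ar--Bertini \cite[Proposition 9.2.26]{l2} as you indicate, so that is only a citation issue, not the main gap.)

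What actually rescues the statement --- and what the paper uses --- is the special structure of $\mu^*H_X$ that your argument never exploits: since $\mu$ is birational and $H_X$ is ample, $\mu^*H_X-\e F$ is ample for $0<\e\ll1$, where $F$ is the $\mu$-exceptional divisor. Then for $m\gg0$ the system $|m(\mu^*H_X-\e F)-g^*H_1|$ is base-point-free for a very ample $H_1$ on $Z$, and writing $\mu^*H_X\sim_\Q\frac1m A_m+\e F+\frac1m g^*H_1$ with $A_m$ a general member, the divisor $\G:=\frac1m A_m+\e F$ has small coefficients, with $A_m$ general in a free system, so kltness of $(Y_z,(B_Y+\G)_{|Y_z})$ follows from (\ref{eq:restr'}) by Bertini and openness of the klt condition; one sets $H_Z=\frac1{2m}H_1$. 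No multiplier ideals are needed. Your approach could be repaired along the same lines (e.g. $\bg(|kM|)\supseteq\cO_Y(-k\e F)$ up to a free part once one knows $\mu^*H_X-\e F-2g^*H_Z$ is ample), but that repair is precisely the decomposition above, not the asymptotic restriction argument as you wrote it.
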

\begin{proof}[Proof of Lemma \ref{lem:elem}]
Let $F$ be the exceptional divisor of $\mu$. 
For $0<\e\ll 1$ the divisor $\m^*H_X-\e F$ is ample. 
Take a very ample divisor $H_1$  on $Z$, an integer $m\gg0$ large enough so that $m(\m^*H_X-\e F)-g^*H_1$ is base-point-free and a general member 
$$
 A_m\in |m(\m^*H_X-\e F)-g^*H_1|.
$$
Then $\mu^*H_X\sim_\Q \frac{1}{m}A_m + \e F+\frac{1}{m}g^*H_1$. 
For $m\gg0$ and $0<\e\ll1$, and $z\in Z$ general, the pair 
$$
 (Y_z, (\frac{1}{m}A_m+\e F+ B)_{|Y_z})
$$
is klt, since $ (Y_z, B_{|Y_z})$ is. To conclude the proof it is sufficient to set $H_Z:=\frac{1}{2m}H_1$ and $\G:=\frac{1}{m}A_m + \e F$.
\end{proof}
By Lemma \ref{lem:elem} and (\ref{eq:can}) we have that
$$
 (K_{Y/Z}+B_Y+\G)_{|Y_z}\sim_\Q (K_Y+\E)_{|Y_z}\sim_\Q E_{|Y_z}
$$
is effective. Therefore, by Corollary \ref{cor:campana}, there exists a positive integer $b>0$ such that 
\begin{equation}\label{eq:not0}
h^0(Y, b(K_{Y/Z}+B_Y+\G+g^*(\d H_Z)))\not =0.
\end{equation}
Consider a divisor 
\begin{equation}\label{eq:M}
M\in |b(K_{Y/Z}+B_Y+\G+g^*(\d H_Z))|.
\end{equation}
Let $C\subset Y$ be a mobile curve coming from $X$, i.e. the pull-back via $\mu$ of a general complete intersection curve on $X$. 
Then $C\cdot M\geq 0$. Equivalently we have 
\begin{equation}\label{eq:equiv}
C\cdot g^*K_Z\leq  C\cdot g^*(\d H_Z)+ b(K_Y+B_Y+\G)\cdot C.
\end{equation}
Notice that by (\ref{eq:can}), (\ref{eq:E}) and Lemma \ref{lem:elem} we have
\begin{equation}\label{eq:utile}
 (K_Y+B_Y+\G)\sim_\Q \mu^*(K_X+\D)+E+2\d g^*H_Z.
\end{equation}
Now, 
\begin{enumerate}
\item[(a)] $\mu^*(K_X+\D)\cdot C\to 0$ as $\d\to 0,$ since $K_X+\D\sim_\Q \d H_X$ by (i) above;
\item[(b)] $E\cdot C=0,$ since $E$ is $\mu$-exceptional and $C$ is the pull-back via $\mu$ of a curve on $X$;
\item[(c)]   $2\d (g^*H_Z)\cdot C\to 0$ as $\d\to 0$.
\end{enumerate}
Therefore letting $\d\to 0$ in (\ref{eq:equiv}), from (\ref{eq:utile}) and (a), (b) and (c) we deduce
\begin{equation}\label{eq:leq}
C\cdot g^*K_Z\leq 0.
\end{equation}
If we have strict inequality $C\cdot g^*K_Z< 0$, then $Z$ is uniruled by \cite{MM}. Assume now that $C\cdot g^*K_Z= 0$ and $Z$ is not uniruled. By \cite{bdpp} the canonical class $K_Z$ is pseff 
 and we can therefore consider its divisorial Zariski decomposition $K_Z\equiv P+N$ into a positive part $P$ which is nef in codimension $1$, and a negative one $N$, which is an effective $\R$-divisor (see \cite{bouck} and \cite{N}). 
Since the family of general complete intersection forms a  connecting family (in the sense of \cite[Notation 8.1]{bdpp}) and $g(C)\cdot K_Z=0$, by \cite[Theorem 9.8]{bdpp} we have that $P\equiv 0$. 
Then the numerical Kodaira dimension $\kappa_\sigma(Z)=0$ and hence $\kappa(Z)=0$, by \cite[Chapter V, Corollary 4.9]{N}. This concludes the proof of the theorem when $-(K_X+\D)$ is only assumed to be pseff. 

If moreover $-(K_X+\D)$ is big, then we do not need to add the small ample $\d H_X$ to it in order to apply Proposition \ref{prop:star}. In this case we may  therefore replace our original pair $(X,\D)$ by  a new pair, again called $(X,\D)$, such that:
\begin{enumerate}\label{enu:red}
\item[(j)] $K_X+\D\sim_\Q 0$;   
\item[(jj)] $\D=A+B$, where $A$ and $B$ are $\Q$-divisor which are respectively ample and effective;
\item[(jjj)] the restrictions $(X_z, \D_{|X_z})$ and $(X_z, B_{|X_z})$ to the  fiber  over the general point $z\in Z$ are klt.
\end{enumerate}
Then we argue as in the pseff case. 
Notice that in this case, from (\ref{eq:koddim}) and (j) above we deduce that
\begin{equation}\label{eq:kod0}
\kappa(Y,K_Y+\E)=0.
\end{equation}
As in Lemma \ref{lem:elem}, we write $\mu^*A\sim_\Q 2g^*A_Z+\G$, with $A_Z$ ample on $Z$, $\G$ effective and the restriction $(Y_z,(B_Y+\G)_{Y_z})$ to the general fiber klt. As before, using Campana's positivity result  we deduce
\begin{equation}\label{eq:not0big}
h^0(Y, b(K_{Y/Z}+B_Y+\G+g^*(A_Z)))\not =0.
\end{equation}
Hence for all $m>0$ sufficiently divisible, the group 
$H^0(Z, mb(K_Z+A_Z))$ injects into 
$$H^0(Y, mb(K_{Y/Z}+B_Y+\G+g^*A_Z+f^*(K_Z+A_Z)))=
H^0(Y, mb(K_{Y}+B_Y+\mu^*A))
$$
(for the equality above we use again Lemma \ref{lem:elem}). Since $K_Y+\E=K_{Y}+B_Y+\mu^*A$ 
we deduce that 
\begin{equation}\label{eq:ineq}
0=\kappa (Y, K_Y+\E)\geq \kappa (Z,K_Z+A_Z). 
\end{equation}
If the variety $Z$ were not uniruled, by \cite{bdpp} and \cite{MM} this would imply that 
$K_Z$ is pseff. Therefore $K_Z+A_Z$ is big, and (\ref{eq:ineq}) yields the desired contradiction.
The proof of the theorem  in the case $-(K_X+\D)$ big is now completed. 
\end{proof}

Notice that, when $-(K_X+\D)$ is big, the proof  of Theorem \ref{thm:gen} shows the following statement.
which will be used in the proof of Theorem  \ref{thm:main}.

\begin{thm}\label{thm:kod=0}
Let $(X,\D)$ be a pair such that $\kappa(K_X + \D)=0$, and that $\D$ is big. Let $f : X \dasharrow Z$ be a rational map with connected fibers.
Assume that $\Nklt (X,\D)$ doesn't dominate $Z$,  Then $Z$ is uniruled.
\end{thm}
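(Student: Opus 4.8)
The plan is to re-run the argument from the proof of Theorem \ref{thm:gen} in the case $-(K_X+\D)$ big: the present hypotheses allow one to re-enter that proof right after its reduction step (conditions (j)--(jjj) there), with the mild difference that $\kappa(K_X+\D)=0$ replaces the stronger $K_X+\D\sim_\Q0$ used there — which, as we will see, is harmless. In particular the preliminary reduction via Proposition \ref{prop:star} is not needed. Throughout I assume $\dim Z\ge 1$.

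First I would reduce to the situation $\D=A+B$ with $A$ an ample $\Q$-divisor and $B$ effective. Since $\Nklt(X,\D)$ does not dominate $Z$, a general fibre of $f$ (computed on a model on which $f$ becomes a morphism) is disjoint from $\Nklt(X,\D)$, so by the generic restriction theorem for multiplier ideals \cite[Theorem 9.5.35]{l2} the pair $(X_z,\D_{|X_z})$ is klt for $z\in Z$ general. Since $\D$ is big, Lemma \ref{lem:kodaira} then produces $A$ ample and $B$ effective with $\D\sim_\Q A+B$ such that $(X_z,(A+B)_{|X_z})$ and $(X_z,B_{|X_z})$ are klt. As $K_X+A+B\sim_\Q K_X+\D$ we still have $\kappa(X,K_X+A+B)=0$, so I may replace $\D$ by $A+B$ and assume from now on that $\D=A+B$ is of this form.

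Next I would pass to a smooth model: choose $\mu:Y\to X$ a log-resolution of $(X,\D)$ which also resolves the indeterminacies of $f$, with induced morphism $g:Y\to Z$, and write $K_Y+\E=\mu^*(K_X+\D)+E$ with $\E,E$ effective without common components, $E$ being $\mu$-exceptional, and $\E=\mu^*A+B_Y$. Just as for (\ref{eq:koddim}) and (\ref{eq:restr'}) this yields $\kappa(Y,K_Y+\E)=\kappa(X,K_X+\D)=0$ and that $(Y_z,(B_Y)_{|Y_z})$ is klt for general $z$. I would then apply Lemma \ref{lem:elem} to the ample divisor $A$ (its proof uses only ampleness of the divisor in question) to obtain an ample $\Q$-divisor $A_Z$ on $Z$ with $\mu^*A\sim_\Q 2g^*A_Z+\G$, $\G$ effective and $(Y_z,(B_Y+\G)_{|Y_z})$ klt. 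The crucial point is then that
\[
(K_{Y/Z}+B_Y+\G)_{|Y_z}\sim_\Q (K_Y+\E)_{|Y_z}\sim_\Q (\mu^*(K_X+\D)+E)_{|Y_z}
\]
has non-negative Kodaira dimension: $E$ is effective and, since $\kappa(K_X+\D)=0$, there is an effective $\Q$-divisor $N\sim_\Q K_X+\D$ which for general $z$ does not contain $X_z$, whence $(\mu^*N)_{|Y_z}$ is effective. Thus Corollary \ref{cor:campana} applies and gives, for every ample $\Q$-divisor $A_Z$, an integer $b>0$ with $h^0(Y,b(K_{Y/Z}+B_Y+\G+g^*A_Z))\ne 0$. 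Arguing exactly as in the big case of Theorem \ref{thm:gen}, for $m$ sufficiently divisible the space $H^0(Z,mb(K_Z+A_Z))$ then injects into $H^0(Y,mb(K_{Y/Z}+B_Y+\G+g^*A_Z+g^*(K_Z+A_Z)))=H^0(Y,mb(K_Y+\E))$, the displayed identity being $K_{Y/Z}+B_Y+\G+2g^*A_Z\sim_\Q K_Y+\E$ from Lemma \ref{lem:elem}. Hence $\kappa(Z,K_Z+A_Z)\le\kappa(Y,K_Y+\E)=0$; if $Z$ were not uniruled then $K_Z$ would be pseff by \cite{bdpp} and \cite{MM}, so $K_Z+A_Z$ would be big with $\kappa(Z,K_Z+A_Z)=\dim Z\ge 1$, a contradiction. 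Therefore $Z$ is uniruled.

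The step I expect to be most delicate is the fibrewise bookkeeping: one must verify that the hypothesis $\kappa(K_X+\D)=0$ — weaker than the $K_X+\D\sim_\Q 0$ available in Theorem \ref{thm:gen} — still secures the effectivity of $(K_{Y/Z}+B_Y+\G)_{|Y_z}$ needed to apply Campana's positivity result, and that klt-ness of the relevant restriction to the general fibre persists through both the splitting $\D\sim_\Q A+B$ and the perturbation by $\G$. Once these are granted, the proof is a transcription of the one already given for Theorem \ref{thm:gen}.
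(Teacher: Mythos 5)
Your proposal is correct and follows precisely the route the paper intends: the authors present Theorem~\ref{thm:kod=0} as a remark extracted from the big case of the proof of Theorem~\ref{thm:gen}, where the reduction via Proposition~\ref{prop:star} already delivers exactly your hypotheses (with $K_X+\D\sim_\Q 0$ in place of the weaker $\kappa(K_X+\D)=0$), and the remainder of that proof is what you transcribe. Your verification that $\kappa(K_X+\D)=0$ is enough both to feed Corollary~\ref{cor:campana} (effective $N\sim_\Q K_X+\D$ restricting effectively to the general fiber) and to close the argument via~\eqref{eq:ineq} is the correct and intended adaptation, and the remaining steps --- Lemma~\ref{lem:kodaira}, the resolution, Lemma~\ref{lem:elem}, and the injection $H^0(Z,mb(K_Z+A_Z))\hookrightarrow H^0(Y,mb(K_Y+\E))$ --- match the paper line by line.
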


%
\subsection{The proof of the consequences of the main result}\label{SS:maincons}
%

\begin{proof}[Proof of Theorem \ref{thm:main}]
Let $(X,\Delta)$ be a pair with $-(K_X+\D)$ big. From  Proposition \ref{prop:star} we deduce that there exists an effective divisor $D\sim_{\bf Q} -(K_X+\D)$ such that
$$\Nklt (X,\D + D) \subset \nn (-(K_X+\D))\cup \Nklt(X,\D).$$
Let $\nu : X' \rightarrow X$ a log-resolution of $(X,\Delta+D)$.
We write
\begin{equation}\label{eq:canV}
 K_{X'}+{\D'}=\nu^*(K_X+\D+D)+E,
\end{equation}
where ${\D'}$ and $E$ are effective divisors without common components and $E$ is $\nu$-exceptional.
Notice that from (\ref{eq:canV}) and the fact that $E$ is exceptional we deduce that
\begin{equation}\label{eq:koddimV}
\kappa(X',K_{X'}+{\D'})=\kappa(X,K_X+\D+D)=0\ \  
\end{equation}
Notice also that $\nu(\Nklt (X',{\D'})) \subset \nn (-(K_X+\D))\cup \Nklt(X,\D)$.

Consider now the rational quotient $\r:X'\dra R':=R(X')$.
By the main theorem of \cite{GHS}, the variety $R'$ is not uniruled. Therefore from Theorem \ref{thm:kod=0} we obtain that $\Nklt (X',{\D'})$ dominates $R'$.

Let $x\in X$ be a general point. Let  $y\in X'$ be a general point such that $\nu(y)=x$.
Since by generic smoothness a general fiber of $f$ is smooth, thus rationally connected,  there is a single rational curve $R_y$ passing through $y$ and intersecting $\Nklt (X',{\D'})$. Projecting this curve on $X$, and using the fact that $\nu(\Nklt (X',{\D'})) \subset \nn (-(K_X+\D))\cup \Nklt(X,\D)$, we have the same property for $X$, that is : there exists a rational curve $R_x:=\nu(R_y)$ containing $x$ and intersecting 
an irreducible component of 
 $\nn (-(K_X+\D))\cup \Nklt(X,\D)$  (it is sufficient to take any irreducible component $X'$ of  $\nn (-(K_{X}+\D))\cup \Nklt(X,\D)$ containing the component of $\nu(\Nklt (X',\D'))$ dominating $R'$). 
\end{proof}

\begin{proof}[Proof of Corollary \ref{cor:pi1}]
The statement follows immediately from \cite[Theorem 2.2]{cam-pi1}.
\end{proof}

\begin{proof}[Proof of Theorem \ref{thm:genlisse}]
The structure of the  proof is identical to that in the singular case.
In the smooth case we get a better result because instead of Proposition \ref{prop:star} we may invoke the following result.
\begin{prop}\label{prop:star_lisse}\cite[Example 9.3.57 (i)]{l2}
Let $X$ be a smooth projective variety and $L$ a big $\Q$-Cartier divisor on $X$. Then there exists an effective divisor $D\sim_\Q L$ such that $\Nklt (X,D)\subset \cJ (||L||) $. 
\end{prop}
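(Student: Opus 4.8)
\textbf{Proof proposal for Proposition \ref{prop:star_lisse}.}

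The plan is to extract the divisor $D$ directly from a single sufficiently divisible member of the asymptotic multiplier ideal system attached to $\|L\|$, and then to check the stated inclusion of non-klt loci via the local behaviour of multiplier ideals. Concretely, fix a large and sufficiently divisible integer $m>0$. Since $L$ is big, the linear series $|mL|$ is non-empty and its base ideal ${\frak b}(|mL|)$ sits inside ${\cJ}(m\cdot\|L\|)$, with the reverse "subadditivity-type" inclusion ${\cJ}(m\cdot\|L\|)\otimes{\cO}_X(-E)\subset{\frak b}(|mL|)$ for a fixed effective divisor $E$ independent of $m$ (this is \cite[Theorem 11.2.21]{l2}; exactly the ingredient used in the proof of Proposition \ref{prop:star}). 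The key quantitative input, which is the content of \cite[Example 9.3.57 (i)]{l2}, is that one can choose a sufficiently divisible $m$ and a divisor $D_m\in|mL|$ whose associated multiplier ideal ${\cJ}(X,\frac1m D_m)$ agrees with ${\cJ}(\|L\|)$ (more precisely: the asymptotic multiplier ideal of $\|L\|$ is computed by $\frac1m$ times a general, or suitable, divisor in $|mL|$ for $m$ in an appropriate divisibility class). Then I would set $D:=\frac1m D_m\sim_\Q L$.

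The next step is to identify $\Nklt(X,D)$ with $\Cosupp({\cJ}(X,D))$. Because $X$ is smooth, the pair $(X,D)$ is klt at a point $x$ precisely when ${\cJ}(X,D)_x={\cO}_{X,x}$, i.e. when $x\notin\Cosupp({\cJ}(X,D))$; this is the standard dictionary between log-discrepancies and multiplier ideals in the smooth case (see \cite{l2}, and note that here there is no boundary to worry about since the "old" boundary is empty). Hence $\Nklt(X,D)=\Cosupp({\cJ}(X,\frac1m D_m))$.

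Finally I would combine the two previous steps: by the choice of $m$ and $D_m$ we have ${\cJ}(X,\frac1m D_m)={\cJ}(\|L\|)$, so
$$\Nklt(X,D)=\Cosupp({\cJ}(X,\tfrac1m D_m))=\Cosupp({\cJ}(\|L\|))=:\cJ(\|L\|),$$
which is even an equality, and in particular the desired inclusion $\Nklt(X,D)\subset\cJ(\|L\|)$ holds. (If one only wants the inclusion, it suffices to know ${\cJ}(X,\frac1m D_m)\supseteq{\cJ}(\|L\|)$, which follows from ${\frak b}(|mD|)\subset{\cJ}(m\cdot\|L\|)$ together with the definition of asymptotic multiplier ideals as the maximal element of the family $\{{\cJ}(\frac1k{\frak b}(|kL|))\}_k$, without needing the sharp equality.)

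The main obstacle is purely a matter of invoking the right normalization: one must make sure that $m$ is chosen in the divisibility class for which $\frac1m$ times a suitable member of $|mL|$ actually realizes the asymptotic multiplier ideal rather than a strictly smaller one, and that $D_m$ is chosen general enough (or with the correct multiplicities) so that no spurious components of $D_m$ enlarge $\Nklt(X,D)$ beyond $\Cosupp({\cJ}(\|L\|))$. All of this is packaged in \cite[Example 9.3.57 (i)]{l2}, so in our write-up it amounts to citing that reference together with the smooth-case identification $\Nklt(X,D)=\Cosupp({\cJ}(X,D))$; there is no genuinely new geometric difficulty.
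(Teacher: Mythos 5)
The paper offers no proof of this proposition at all: it is quoted directly from \cite[Example 9.3.57 (i)]{l2}. Your argument is a correct reconstruction of the standard proof behind that citation, and it follows the natural route: for $m$ sufficiently large and divisible one has $\cJ(\|L\|)=\cJ(\tfrac1m\cdot|mL|)$, a general member $D_m\in|mL|$ computes $\cJ(\tfrac1m\cdot|mL|)=\cJ(X,\tfrac1m D_m)$ (the general-member statement applies because $\tfrac1m<1$), and on a smooth variety $\Nklt(X,\tfrac1m D_m)=\Cosupp\,\cJ(X,\tfrac1m D_m)$; setting $D:=\tfrac1m D_m\sim_\Q L$ this even gives equality with $\Cosupp\,\cJ(\|L\|)$, which is what the statement means by the (abusive) inclusion $\Nklt(X,D)\subset\cJ(\|L\|)$. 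The appeal to \cite[Theorem 11.2.21]{l2} in your first paragraph is never used and can be dropped.

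One slip, confined to your final parenthetical: the inclusion you claim comes for free is precisely the one that does not. Maximality in the definition of the asymptotic multiplier ideal gives $\cJ(\tfrac1m\bg(|mL|))\subseteq\cJ(\|L\|)$ for every $m$, hence $\cJ(X,\tfrac1m D_m)\subseteq\cJ(\|L\|)$ for $D_m$ general; passing to cosupports this yields $\Cosupp\,\cJ(\|L\|)\subseteq\Nklt(X,D)$, the reverse of what is wanted, and the containment $\bg(|mL|)\subseteq\cJ(m\cdot\|L\|)$ likewise points the wrong way. The inclusion actually needed, $\cJ(\|L\|)\subseteq\cJ(X,\tfrac1m D_m)$, is exactly the realization of the asymptotic multiplier ideal at a sufficiently divisible level $m$, so choosing $m$ in the right divisibility class is not a removable normalization issue but the substance of the argument. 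Since your main proof does make that choice, it stands as written; simply delete the parenthetical shortcut.
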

The conclusion now follows.
\end{proof}

\begin{proof}[Proof of Corollary \ref{cor:alblisse}]
Let $X\buildrel{f}\over{\to}Y\buildrel{g}\over{\to}\Alb(X)$ be the Stein factorization of $\Alb_X$. 
Since an abelian variety does not contain rational curves, from the hypothesis and from Theorem \ref{thm:genlisse} it follows that $\kappa(Y)=0$. From \cite{U} it follows that $\Alb_X(X)$ is an abelian variety. Therefore by \cite{kawab} together with the universality of the Albanese map we have that $\Alb_X$ is surjective and with connected fibers. 
\end{proof}
\begin{proof}[Proof of Corollary \ref{cor:alb}]
It is sufficient to pass to a smooth model $X'\to X$ and to argue as in the proof of Corollary \ref{cor:alblisse}.
\end{proof}

%

\vskip 30pt

\noindent
{\small 
Institut de Recherche Math\'ematique Avanc\'ee\\
Universit\'e L. Pasteur et CNRS\\ 
7, Rue R. Descartes - 67084 Strasbourg Cedex, France \\
E-mail : {\tt broustet@math.u-strasbg.fr}} and {\tt pacienza@math.u-strasbg.fr} 

\end{document}